\DeclareMathOperator*{\argmax}{arg\,max}
\crefname{equation}{\hspace{-0.4em}}{\hspace{-0.4em}}
\newtheorem{theorem}{Theorem}[section]
\newtheorem{lemma}[theorem]{Lemma}
\newtheorem{remark}[theorem]{Remark}
\newtheorem{definition}[theorem]{Definition}
\newtheorem{example}[theorem]{Example}
\renewenvironment{proof}{\noindent {\bf Proof.}}{\hfill $\Box$}
\title{Closed-loop strong equilibria for time-inconsistent control problems with higher-order moments
       \thanks{This work is supported by the National Natural Science Foundation of China under Grant 12401611 and the CTBU Research Projects under Grant 2355010.}
}
\date{\vspace{-6ex}}
\author{
    Yike Wang\thanks{School of Finance, Chongqing Technology and Business University, Chongqing, China}.
}
\begin{document}
\maketitle

\begin{abstract}
In this paper, we study closed-loop strong equilibrium strategies for the time-inconsistent control problem with higher-order moments formulated by [Wang et al. SIAM J. Control. Optim., 63 (2025), 1560--1589].
Since time-inconsistency makes the dynamic programming principle inapplicable, the problem is treated as a game between the decision maker and her future selves.
As a time-consistent solution, the previously proposed Nash equilibrium control is merely a stationary point and does not necessarily reach a maximum in the game-theoretical prospective.
To address this issue, we consider the strong equilibrium strategy, from which any spike deviation will be worse off.
We derive sufficient conditions for strong equilibrium strategies by expanding the objective function corresponding to the spike deviation with respect to the variational factor up to higher orders,
and simplify the result by exploiting the partial differential equations that a Nash equilibrium control satisfies.
Next, we examine whether the derived Nash equilibrium control is a strong equilibrium strategy.
In particular, we provide a sufficient condition for the case with linear controlled equations.
When the model parameters are constant, we find that the derived Nash equilibrium control for the mean-variance problem must be a strong equilibrium strategy,
while that for the mean-variance-skewness-kurtosis problem is not necessarily a strong equilibrium strategy.
\end{abstract}

\noindent {\bf Keywords:} time-consistency, higher-order moments, strong equilibrium strategy, game-theoretical prospective, mean-variance-skewness-kurtosis problem
\vspace{3mm}

\noindent {\bf AMS2010 classification:} Primary: 91G10, 49N10; Secondary: 91B05, 49N90
\vspace{3mm}

\section{Introduction}

Bellman's dynamic programming principle is recognized as a cornerstone in the field of control and optimization.
However, there still exist many control problems that violate the dynamic programming principle, where an optimal decision made at a future epoch differs from the initial optimal strategy.
The deviation phenomenon is called ``time-inconsistency'', for which studies can be traced back to \cite{Strotz-1955,Pollak-1968,Peleg-Yaari-1973,Goldman-1980,Laibson-1997,Laibson-1998}.
Akin to the non-exponential discounting problem and the mean-variance problem (see, e.g., \cite{Ekeland-Mbodji-Pirvu-2012} and \cite{Bjork-Murgoci-Zhou-2014}),
these time-inconsistent control problems have attracted much attention and have been studied for several years.
It has become common to treat a time-inconsistent problem as a game between the decision maker and her/his future selves, for which a subgame perfect Nash equilibrium point is adopted as a time-consistent solution.

In the continuous-time framework, deviations from the equilibrium point may occur within an arbitrarily short period.
Notably, the length of this period is usually referred to as the ``variational factor'' (denoted by $\varepsilon $ in this paper).
Given the continuity of the objective function and state processes, the effect of a series of deviations tends to vanish as the variational factor approaches zero.
Due to this situation, in addition to the multi-person hierarchical game approximation (see \cite{Yong-2012-MCRF,Yong-2012-AMASES,Bjork-Murgoci-2014}),
researchers have considered a weaker definition for Nash equilibrium control:
the ratio of the difference in the objective function led by a deviation to the variational factor is not positive, provided that the decision maker intends to maximize the objective function.
This definition is similar to employing the first-order derivative to describe the growth trend.
See, e.g., \cite{Hu-Jin-Zhou-2012,Hu-Jin-Zhou-2017} for the open-loop control problem (where a deviation is imposed on the control process)
and \cite{Bjork-Khapko-Murgoci-2017,Wang-Zheng-2021} for the closed-loop control problem (where a deviation is imposed on the feedback mapping).
As a result, any deviation from the Nash equilibrium control will not lead to significant profits.

However, as mentioned in \cite[Remark 3.5]{Bjork-Khapko-Murgoci-2017}, the previous definition of Nash equilibrium control merely corresponds to a stationary point rather than a maximum point.
In other words, leading to no significant profit does not mean that this deviation will be worse off.
If a tiny profit that at most equals a higher-order infinitesimal of the variational factor exists, then it still motivates an economical or greedy person to deviate from the Nash equilibrium control.
To address this issue, \cite{He-Jiang-2021} greatly extended work of \cite{Huang-Zhou-2021} to fit most time-inconsistent problems and defined strong and regular equilibrium strategies.
Any (resp. a concerned) deviation from a strong (resp. regular) equilibrium strategy is not profitable.

Inspired by the notion of \cite{He-Jiang-2021}, in the present paper we study the strong equilibria for time-inconsistent control problems with higher-order moments.
In fact, in \cite{Wang-Liu-Bensoussan-Yiu-Wei-2025} we have studied the Nash equilibrium controls for those problems formulated by
\begin{equation*}
\left\{ \begin{aligned}
& J ( t,u ) = \mathbb{E}_{t} \Big[ \Psi \Big( t, {X}^{u}_{t}, {X}^{u}_{T}, \mathbb{E}_{t} [ {X}^{u}_{T} ], \mathbb{E}_{t} \big[ ( {X}^{u}_{T} - \mathbb{E}_{t} [ {X}^{u}_{T} ] )^{2} \big], 
                                                                                                   \ldots, \mathbb{E}_{t} \big[ ( {X}^{u}_{T} - \mathbb{E}_{t} [ {X}^{u}_{T} ] )^{n} \big] \Big) \Big], \\
& d {X}^{u}_{t} = b ( t, {X}^{u}_{t}, {u}_{t} ) dt + \sigma ( t, {X}^{u}_{t}, {u}_{t} ) d {W}_{t},
\end{aligned} \right.
\end{equation*}
where the superscript emphasizes the dependence of the control $u$ for the state process $X$.
The (semi-)analytical solution can be obtained in the following particular case:
\begin{equation}
\left\{ \begin{aligned}
& J ( t,u ) = \mathbb{E}_{t} [ {X}^{u}_{T} ] + \mathbb{E}_{t} \Big[ \psi \Big( t, \mathbb{E}_{t} \big[ ( {X}^{u}_{T} - \mathbb{E}_{t} [ {X}^{u}_{T} ] )^{2} \big], 
                                                                          \ldots, \mathbb{E}_{t} \big[ ( {X}^{u}_{T} - \mathbb{E}_{t} [ {X}^{u}_{T} ] )^{n} \big] \Big) \Big], \\
& d {X}^{u}_{t} = ( {A}_{t} {X}^{u}_{t} + {B}_{t} {u}_{t} + {C}_{t} ) dt + ( {D}_{t} {u}_{t} + {F}_{t} ) d {W}_{t}.
\end{aligned} \right.
\label{eq: linear control problem :eq}
\end{equation}
The results of this particular case are summarized in \Cref{exp: linear control problem} in the present paper, and are extended for the notion of strong equilibrium in \Cref{sec: Example continuation}.
The higher-order moment problem is not only a straightforward extension of the mean-variance problem, but also may heuristically produce a solution for control problems with general utility functions or non-linear law-dependent preferences.
See, e.g., \cite[Example 4.6 and Section 5.3]{Wang-Liu-Bensoussan-Yiu-Wei-2025}.
Since these control problems are considered in many mathematical finance scenarios, such as portfolio selection and intertemporal consumption planning,
it makes sense to investigate whether the previously derived Nash equilibrium controls are strong equilibrium strategies.
The results indicate whether those controls can be exactly implemented in a stable and continuous manner by a sophisticated person in the game-theoretical framework.

The main contributions of the present paper are as follows.
\emph{In theory}, we provide the sufficient condition under which the closed-loop Nash equilibrium control (CNEC) 
for our higher-order moment problem is a closed-loop strong equilibrium strategy (CSES)
by expanding the objective function corresponding to the deviation w.r.t. the variation factor up to adequate orders.
The sufficient condition provided in the present paper is much simpler than that derived in \cite{He-Jiang-2021} since we exploit the partial differential equations (PDEs) that the CNEC satisfies at the beginning of the process of expanding the objective function;
see \Cref{thm: second-order derivative condition for CSES,thm: third-order derivative condition for CSES} and their proofs.
In particular, we find that the third-order expansion is needed if the partial derivative w.r.t. the state argument of the CNEC vanishes at some points.
This interesting proposition is not reported in \cite{He-Jiang-2021}.
\emph{In practice}, we investigate the particular case formulated by \cref{eq: linear control problem :eq} and provide sufficient conditions for the CSES.
On the one hand, we find that the derived CNEC for \cref{eq: linear control problem :eq} is a CSES if $F$ is propositional to $D$ and almost every $| {A}_{t} |$ is far away from some points determined by $( B,D )$.
On the other hand, we consider the mean-variance and mean-variance-skewness-kurtosis problems.
Provided that $( A,B,D,F )$ are constant, we find that the derived mean-variance CNEC must be a CSES, but the derived mean-variance-skewness-kurtosis CNEC is not necessarily a CSES.

The rest of this paper is organized as follows.
In \Cref{sec: Model and problem formulation}, we formulate the control problem in the closed-loop type and introduce the definition of the closed-loop strong equilibrium strategy.
In \Cref{sec: Characterization of CNEC revisited}, we revisit the main results for closed-loop Nash equilibrium control.
In \Cref{sec: Characterization of CSES}, we characterize the closed-loop strong equilibrium strategy according to the sufficient equilibrium conditions up to the third order.
In \Cref{sec: Example continuation}, we provide the CSES results for \cref{eq: linear control problem :eq}.
Finally, we provide brief concluding remarks in \Cref{sec: Concluding remarks}.

\section{Model and problem formulation}
\label{sec: Model and problem formulation}

Let $T$ be a fixed finite time horizon, and $( \Omega, \mathcal{F}, \mathbb{F}, \mathbb{P} )$ with $\mathbb{F} := \{ \mathcal{F}_{t} \}_{ t \ge 0 }$ be a filtered probability space satisfying the usual condition.
For notational simplicity, we consider the one-dimensional standard Brownian motion $\{ {W}_{t} \}_{ t \ge 0 }$, suppose that the filtration $\mathbb{F}$ is generated by $W$ and write $\mathbb{E}_{t} [ \cdot ] := \mathbb{E} [ \cdot | \mathcal{F}_{t} ]$.
In addition, with a slight abuse of notation, we let ${C}^{ {k}_{1}, {k}_{2}, \ldots }$ denote the set of all the functions that are continuously differentiable in the $j$-th argument up to the ${k}_{j}$-th order.

Let us introduce the following controlled stochastic differential equation (SDE):
\begin{equation}
\left\{ \begin{aligned}
d {X}^{t,x,u}_{s} & = b \big( s, {X}^{t,x,u}_{s}, u ( s, {X}^{t,x,u}_{s} ) \big) ds + \sigma \big( s, {X}^{t,x,u}_{s}, u ( s, {X}^{t,x,u}_{s} ) \big) d {W}_{s}, \quad \forall s \in [ t,T ]; \\
  {X}^{t,x,u}_{t} & = x,
\end{aligned} \right.
\label{eq: controlled SDE :eq}
\end{equation}
where the functions $b, \sigma: [ 0,T ] \times \mathbb{R} \times \mathbb{R} \to \mathbb{R}$ are of ${C}^{2,4,4}$.
The superscript emphasizes the dependence of the initial pair $( t,x )$ (namely, the initial condition ${X}_{t} = x$) and the feedback control $u: [ 0,T ] \times \mathbb{R} \to \mathbb{R}$ for the state process $X$ on $[ t,T ] \times \Omega $.
On the other hand, we consider the following objective function with finitely many higher-order original moments ($n \in \mathbb{N}$, $n \ge 2$):
\begin{equation}
J ( t,x,u ) := \mathbb{E}_{t} \Big[ \Phi \Big( t, x, {X}^{t,x,u}_{T}, \mathbb{E}_{t} [ {X}^{t,x,u}_{T} ], \mathbb{E}_{t} [ ( {X}^{t,x,u}_{T} )^{2} ], \ldots, \mathbb{E}_{t} [ ( {X}^{t,x,u}_{T} )^{n} ] \Big) \Big],
\label{eq: objective function :eq}
\end{equation}
where the function $\Phi: [ 0,T ] \times \mathbb{R} \times \mathbb{R} \times \mathbb{R}^{n} \to \mathbb{R}$ is of ${C}^{0,0,4,2}$.
Clearly, provided that \cref{eq: controlled SDE :eq} admits a unique $\mathbb{F}$-adapted strong solution for any initial pair $( t,x )$,
which leads to ${X}^{ t, {X}^{ 0,x,u }_{t}, u }_{s} = {X}^{0,x,u}_{s}$ for any $t \in [ 0,s ] \subseteq [ 0,T ]$ and $x \in \mathbb{R}$, plugging
\begin{equation*}
\Phi ( t, x, y, {z}_{1}, \ldots, {z}_{n} ) = \Psi \bigg( t,x,y, {z}_{1}, {z}_{2} - {z}_{1}^{2}, \ldots, \sum_{j=0}^{n} \binom{n}{j} (-1)^{j} {z}_{1}^{j} {z}_{n-j} \bigg)
\end{equation*}
with $x = {X}^{ 0, {X}_{0}, u }_{t}$ into \cref{eq: objective function :eq} immediately produces
\begin{equation*}
J ( t, {X}^{ 0, {X}_{0}, u }_{t}, u ) = \mathbb{E}_{t} \Big[ \Psi \Big( t, {X}^{u}_{t}, {X}^{u}_{T}, \mathbb{E}_{t} [ {X}^{u}_{T} ], \mathbb{E}_{t} \big[ ( {X}^{u}_{T} - \mathbb{E}_{t} [ {X}^{u}_{T} ] )^{2} \big], 
                                                                                                   \ldots, \mathbb{E}_{t} \big[ ( {X}^{u}_{T} - \mathbb{E}_{t} [ {X}^{u}_{T} ] )^{n} \big] \Big) \Big].
\end{equation*}
For notational simplicity, we introduce the row vector $\vec{m}^{u} ( t,x ) \in \mathbb{R}^{n}$ with its $j$-th element being $\mathbb{E}_{t} [ ( {X}^{t,x,u}_{T} )^{j} ]$
so that $J ( t,x,u ) = \mathbb{E}_{t} [ \Phi ( t, x, {X}^{t,x,u}_{T}, \vec{m}^{u} ( t,x ) ) ]$.

\begin{definition}\label{def: admissible control}
A measurable function $u: [ 0,T ] \times \mathbb{R} \to \mathbb{R}$ is an admissible feedback control if for any initial pair $( t,x )$,
\cref{eq: controlled SDE :eq} admits a unique $\mathbb{F}$-adapted strong solution such that $\mathbb{E} [ | {X}^{t,x,u}_{T} |^{n} ] < \infty $ and $\mathbb{E} [ | \Phi ( t, x, {X}^{t,x,u}_{T}, \vec{m}^{u} ( t,x ) ) | ] < \infty $.
\end{definition}

Let $\mathcal{U}$ denote the set of all the admissible feedback controls.
Following \cite{Ekeland-Pirvu-2008,Ekeland-Lazrak-2010,Ekeland-Mbodji-Pirvu-2012,Bjork-Murgoci-2014,Bjork-Murgoci-Zhou-2014,Bjork-Khapko-Murgoci-2017},
it is supposed to find a CNEC as a time-consistent solution, which is defined as follows:

\begin{definition}\label{def: closed-loop Nash equilibrium control}
$\tilde{u} \in \mathcal{U}$ is a CNEC if
\begin{equation}
\left\{ \begin{aligned}
& 0 \le \liminf_{ \varepsilon \downarrow 0 }
        \frac{1}{\varepsilon } \big( J ( t, {X}^{ 0, {X}_{0}, \tilde{u} }_{t}, \tilde{u} ) - J ( t, {X}^{ 0, {X}_{0}, \tilde{u} }_{t}, \tilde{u}^{ t, \varepsilon, \zeta } ) \big), \\
& \forall \zeta \in \liminf_{ \varepsilon \downarrow 0 } \mathcal{Z}^{ t, \tilde{u} }_{\varepsilon }, ~ \mathbb{P}-a.s., ~ a.e. ~ t \in [ 0,T ),
\end{aligned} \right.
\label{eq: closed-loop Nash equilibrium control :eq}
\end{equation}
where $\tilde{u}^{ t, \varepsilon, \zeta }$ is expressed as $\tilde{u}^{ t, \varepsilon, \zeta } ( s, \cdot ) = \tilde{u} ( s, \cdot ) {1}_{\{ s \notin [ t, t + \varepsilon ) \}} + \zeta {1}_{\{ s \in [ t, t + \varepsilon ) \}}$,
and $\mathcal{Z}^{ t, \tilde{u} }_{\varepsilon } := \{ \zeta \in \mathbb{R}: \tilde{u}^{ t, \varepsilon, \zeta } \in \mathcal{U} \}$.
\end{definition}

\begin{remark}
If $b ( t, \cdot, \cdot )$, $\sigma ( t, \cdot, \cdot )$ and $\tilde{u} ( t, \cdot )$ are Lipschitz, for which the Lipschitz continuity parameters are uniformly bounded for all $t \in [ 0,T ]$,
and $b ( \cdot, 0, 0 )$, $\sigma ( \cdot, 0, 0 )$ and $\tilde{u} ( \cdot, 0 )$ are bounded on $[ 0,T ]$,
then \cref{eq: controlled SDE :eq} is a Lipschitz SDE corresponding to $u = \tilde{u}, \tilde{u}^{ t, \varepsilon, \zeta }$ (for any $( \varepsilon, \zeta ) \in ( 0,T-t ] \times \mathbb{R}$) with
\begin{equation*}
\int_{t}^{T} \mathbb{E} \big[ \big| b \big( s,0, u ( s,0 ) \big) \big|^{p} + \big| \sigma \big( s,0, u ( s,0 ) \big) \big|^{p} \big] ds < \infty, \quad \forall p \ge 2,
\end{equation*}
implying that $\mathbb{E} [ \sup_{ s \in [ t,T ] } | {X}^{ t,x,u }_{s} |^{p} ] < \infty $ due to \cite[Theorem 16.1.2]{Cohen-Elliott-2015}.
Furthermore, if $\Phi ( t,x,y, \vec{z} )$ exhibits polynomial growth in $y \in \mathbb{R}$, then $\tilde{u}, \tilde{u}^{ t, \varepsilon, \zeta } \in \mathcal{U}$.
Hence, $\liminf_{ \varepsilon \downarrow 0 } \mathcal{Z}^{ t, \tilde{u} }_{\varepsilon } = \mathbb{R}$.
\end{remark}

In fact, there are various definitions for the CNEC that are slightly stronger than \Cref{def: closed-loop Nash equilibrium control}.
For example, \cref{eq: closed-loop Nash equilibrium control :eq} could be replaced by
\begin{equation}
\left\{ \begin{aligned}
& 0 \le \liminf_{\varepsilon \downarrow 0}
        \frac{1}{\varepsilon } \big( J ( t,x, \tilde{u} ) - J ( t,x, \tilde{u}^{ t, \varepsilon, \zeta } ) \big), \\
& \forall \zeta \in \liminf_{ \varepsilon \downarrow 0 } \mathcal{Z}^{ t, \tilde{u} }_{\varepsilon }, ~ \mathbb{P}-a.s., ~ \forall ( t,x ) \in [ 0,T ) \times \mathbb{R};
\end{aligned} \right.
\label{eq: stronger definition for CNEC :eq}
\end{equation}
or the perturbation parameter $\zeta $ could be allowed to be a general function, i.e., $\tilde{u}^{ t, \varepsilon, \zeta } ( s, \cdot ) = \zeta ( s, \cdot )$ for $s \in [ t, t + \varepsilon )$.
In a nutshell, a CNEC is supposed to be an equilibrium point such that any deviation from it (i.e., changing $\tilde{u} |_{ [ t, t + \varepsilon ) \times \mathbb{R} }$ to $\zeta $) does not lead to any significant profit.

Notably, a deviation for a CNEC as defined in \Cref{def: closed-loop Nash equilibrium control} is not necessarily worse off.
Indeed, the sufficient condition for the CNEC studied in, e.g., \cite{Bjork-Khapko-Murgoci-2017} and \cite{Wang-Liu-Bensoussan-Yiu-Wei-2025},
only guarantees $J ( t,x, \tilde{u} ) - J ( t,x, \tilde{u}^{ t, \varepsilon, \zeta } ) \ge o ( \varepsilon )$ rather than $J ( t,x, \tilde{u} ) \ge J ( t,x, \tilde{u}^{ t, \varepsilon, \zeta } )$.
For example, if $J ( t,x, \tilde{u}^{ t, \varepsilon, \zeta } ) - J ( t,x, \tilde{u} ) = K ( t,x, \varepsilon ) {\varepsilon }^{2} > 0$ holds for all sufficiently small $\varepsilon $ and some bounded function $K$,
then $\tilde{u}$ is a CNEC, but the deviation $\tilde{u}^{ t, \varepsilon, \zeta }$ is profitable.
This result contradicts the original idea of Nash equilibrium control as described in, e.g., \cite{Bjork-Murgoci-Zhou-2014}.
To address this issue, \cite{He-Jiang-2021} studied strong equilibrium strategies for time-inconsistent control problems, including the mean-variance problem and the nonexponential discounting problem,
and found that whether a CNEC is a strong equilibrium strategy depends on the range of $\zeta $.
If $\zeta $ is allowed to be a general function, then the CNECs derived for the mean-variance problem and the non-exponential discounting problem are not strong equilibrium strategies.
In particular, the derived CNEC for the mean-variance problem is a strong equilibrium strategy if $\zeta $ is only assumed to be a constant.
Therefore, in our higher-order moment problem, we consider only the constant $\zeta $, and the strong equilibrium strategy is defined as follows.

\begin{definition}\label{def: closed-loop strong equilibrium strategy}
$\tilde{u} \in \mathcal{U}$ is a closed-loop strong equilibrium strategy (CSES), if
\begin{equation}
\left\{ \begin{aligned}
& J ( t, {X}^{ 0, {X}_{0}, \tilde{u} }_{t}, \tilde{u} ) \ge J ( t, {X}^{ 0, {X}_{0}, \tilde{u} }_{t}, \tilde{u}^{ t, \varepsilon, \zeta } ), \\
& \forall \varepsilon \in ( 0, {\epsilon }_{ t, \zeta } ), ~ \exists {\epsilon }_{ t, \zeta } \in ( 0, T-t ), ~ \forall \zeta \in \liminf_{ \varepsilon \downarrow 0 } \mathcal{Z}^{ t, \tilde{u} }_{\varepsilon },
  ~ \mathbb{P}-a.s., ~ a.e. ~ t \in [ 0,T ).
\end{aligned} \right.
\label{eq: closed-loop strong equilibrium strategy :eq}
\end{equation}
\end{definition}

\Cref{def: closed-loop strong equilibrium strategy} is weaker than the definition of strong equilibria in \cite{He-Jiang-2021} but stronger than the definition of regular equilibria therein.
In fact, a regular equilibrium strategy $\tilde{u}$ as defined in that work only realizes the optimum against $\tilde{u}^{ t, \varepsilon, \tilde{u} ( t, {X}_{t} ) }$
rather than $\tilde{u}^{ t, \varepsilon, \zeta }$ with an arbitrariness of $\zeta $.
Nevertheless, in terms of the result, the sufficient condition for a CSES in this paper also serves as a regular equilibrium strategy,
since the second-order derivative condition \cref{eq: second-order derivative condition :eq} for the equilibrium is included.

Clearly, a CSES must be a CNEC.
Therefore, it is supposed to first seek a CNEC and then to examine whether the derived CNEC is a CSES.
Notably, in line with the strong equilibrium strategies defined in \cite{He-Jiang-2021},
\Cref{def: closed-loop strong equilibrium strategy} does not require the uniformity of $\varepsilon $ with respect to $\zeta $.
In other words, a stronger definition of a CSES should consider
\begin{equation*}
\left\{ \begin{aligned}
& J ( t, {X}^{ 0, {X}_{0}, \tilde{u} }_{t}, \tilde{u} ) \ge J ( t, {X}^{ 0, {X}_{0}, \tilde{u} }_{t}, \tilde{u}^{ t, \varepsilon, \zeta } ), \\
& \forall \zeta \in \mathcal{Z}^{ t, \tilde{u} }_{\varepsilon }, ~ \forall \varepsilon \in ( 0, {\epsilon }_{t} ), ~ \exists {\epsilon }_{t} \in ( 0, T-t ), ~ \mathbb{P}-a.s., ~ a.e. ~ t \in [ 0,T ).
\end{aligned} \right.
\end{equation*}
This definition serves a discrete-time approximation with some time lattice, which is fixed before imposing the perturbations.
However, in the current asymptotic analysis for the CNEC and CSES, the abovementioned uniformity cannot be guaranteed by sufficient conditions unless additional complicated conditions are imposed.

To end this section, we list some notations that are adopted in the rest of this paper for ease of reference.
Corresponding to the function $\vec{F}^{ s,y, \vec{z} } := ( {U}^{ s,y, \vec{z} }, \vec{m} ): [ 0,T ] \times \mathbb{R} \to \mathbb{R}^{n+1}$, which is indexed by the parameter triplet $( s,y, \vec{z} ) \in [ 0,T ) \times \mathbb{R} \times \mathbb{R}^{n}$,
we let
\begin{equation*}
\vec{H}^{ s,y, \vec{z} } ( t,x,u ) := \vec{F}^{ s,y, \vec{z} }_{x} ( t,x ) b ( t,x,u ) + \vec{F}^{ s,y, \vec{z} }_{xx} ( t,x ) \hat{\sigma } ( t,x,u ).
\end{equation*}
For ${U}^{ s,y, \cdot } ( t,x ): \mathbb{R}^{n} \to \mathbb{R}$ with adequate smoothness,
its (transposed) gradient vector and Hessian matrix are denoted by $\vec{\lambda }^{ s,y, \vec{z} } ( t,x )$ and ${\mu }^{ s,y, \vec{z} } ( t,x )$, respectively.
Let $\langle \cdot, \cdot \rangle$ be the inner product of two vectors, $\| \cdot \|$ be the Euclidean norm, and $| \cdot |$ be the $\mathbb{L}^{1}$-norm (namely, the Manhattan norm).
For the perturbation $\zeta \in \mathbb{R}$, we define $\hat{\sigma } ( t,x, \zeta ) := \frac{1}{2} | {\sigma }^{\zeta } ( t,x, \zeta ) |^{2}$ and ${g}^{\zeta } ( t,x ) := g ( t,x, \zeta )$ 
and introduce the infinitesimal operator $\mathcal{D}_{\zeta }$ as follows:
\begin{equation*}
\mathcal{D}_{\zeta } f ( t,x ) = {f}_{t} ( t,x ) + {f}_{x} ( t,x ) {b}^{\zeta } ( t,x ) + {f}_{xx} ( t,x ) \hat{\sigma }^{\zeta } ( t,x ).
\end{equation*}
Furthermore, by iterating with $\mathcal{D}_{\zeta }^{0}$ as the identity operator (i.e., $\mathcal{D}_{\zeta }^{0} f ( t,x ) = f ( t,x )$),
we introduce the operators $\mathcal{D}_{\zeta }^{k}$ and $\mathscr{D}_{\zeta }^{k}$ for all positive integers $k$
with $\mathcal{D}_{\zeta }^{k} f ( t,x ) = \mathcal{D}_{\zeta } {g}^{\zeta } ( t,x )$ and $\mathscr{D}_{\zeta }^{k} f ( t,x ) = {g}^{\zeta }_{x} ( t,x ) {\sigma }^{\zeta } ( t,x )$, where ${g}^{\zeta } ( t,x ) = \mathcal{D}_{\zeta }^{k-1} f ( t,x )$.
To avoid misunderstandings, we should clarify that
\begin{align*}
& \mathcal{D}_{\zeta }^{k} {f}^{ g ( t,x ) } ( v,z ) = \big( \mathcal{D}_{\zeta }^{k} {f}^{h} ( t,x ) |_{ ( t,x ) = ( v,z ) } \big) \big|_{ h = g ( t,x ) }, \\
& \mathscr{D}_{\zeta }^{k} {f}^{ g ( t,x ) } ( v,z ) = \big( \mathscr{D}_{\zeta }^{k} {f}^{h} ( t,x ) |_{ ( t,x ) = ( v,z ) } \big) \big|_{ h = g ( t,x ) },
\end{align*}
for the $h$-indexed function ${f}^{h}$ and generic function $g$.
In addition, we define ${\partial }_{\xi } := \frac{\partial }{\partial \xi }$ and ${\partial }_{ \vec{z} } := ( {\partial }_{ {z}_{1} }, \ldots, {\partial }_{ {z}_{n} } )$ 
and introduce the operator $\mathcal{A}_{\xi }$ by $\mathcal{A}_{\xi } f ( \xi ) = f ( \tilde{u} ( t,x ) )$ for a given $\tilde{u} \in {C}^{1,2}$
so that applying the chain rule yields
\begin{align*}
    \mathcal{D}_{\zeta } \mathcal{A}_{\xi }
& = \mathcal{A}_{\xi } \mathcal{D}_{\zeta } 
  + ( \mathcal{D}_{\zeta } \tilde{u} ) \mathcal{A}_{\xi } {\partial }_{\xi } 
  + 2 \tilde{u}_{x} \hat{\sigma }^{\zeta } \mathcal{A}_{\xi } {\partial }_{x} {\partial }_{\xi } 
  + | \tilde{u}_{x} |^{2} \hat{\sigma }^{\zeta } \mathcal{A}_{\xi } {\partial }_{\xi }^{2} \\
& = \mathcal{A}_{\xi } \mathcal{D}_{\zeta } 
  + ( \mathcal{D}_{\zeta } \tilde{u} ) \mathcal{A}_{\xi } {\partial }_{\xi } 
  + 2 \tilde{u}_{x} \hat{\sigma }^{\zeta } {\partial }_{x} \mathcal{A}_{\xi } {\partial }_{\xi } 
  - | \tilde{u}_{x} |^{2} \hat{\sigma }^{\zeta } \mathcal{A}_{\xi } {\partial }_{\xi }^{2} \\
& = \mathcal{A}_{\xi } \mathcal{D}_{\zeta } 
  + \mathcal{D}_{\zeta } ( \tilde{u} \mathcal{A}_{\xi } {\partial }_{\xi } )
  - \tilde{u} \mathcal{D}_{\zeta } \mathcal{A}_{\xi } {\partial }_{\xi }
  - | \tilde{u}_{x} |^{2} \hat{\sigma }^{\zeta } \mathcal{A}_{\xi } {\partial }_{\xi }^{2},
\end{align*}
which will be used to simplify the derivations in \Cref{sec: Characterization of CSES}.

\section{Characterization of CNEC revisited}
\label{sec: Characterization of CNEC revisited}

The sufficient condition for the CNECs of our higher-order moment problem have been studied in \cite{Wang-Liu-Bensoussan-Yiu-Wei-2025}.
Here, we summarize the main results in the following theorem and omit the proof, for which interested reader can also refer to \Cref{lem: second-order perturbation} in the present paper.

\begin{theorem}\label{thm: sufficient condition for CNEC}
Suppose that $\tilde{u} \in \mathcal{U}$.
Assume that for every $( s,y, \vec{z} ) \in [ 0,T ) \times \mathbb{R} \times \mathbb{R}^{n}$,
$\vec{F}^{ s,y, \vec{z} } = ( {U}^{ s,y, \vec{z} }, \vec{m} )$ is the classic ${C}^{1,2}$-solution for the following PDE on $[ 0,T ] \times \mathbb{R}$:
\begin{equation}
0 = \mathcal{D}_{ \tilde{u} ( t,x ) } \vec{F}^{ s,y, \vec{z} } ( t,x ),
\quad s.t. \quad \vec{F}^{ s,y, \vec{z} } ( T,x ) = \big( \Phi ( s,y, x, \vec{z} ), x, {x}^{2}, \ldots, {x}^{n} \big), \\
\label{eq: extended HJB equations :eq}
\end{equation}
and that it satisfies the following square-integrability condition:
\begin{equation}
\mathbb{E} \bigg[ \int_{t}^{T} \| \mathscr{D}_{ \tilde{u} ( v, {X}^{ t,x, \tilde{u} }_{v} ) } \vec{F}^{ s,y, \vec{z} } ( v, {X}^{ t,x, \tilde{u} }_{v} ) \|^{2} dv \bigg] < \infty,
\quad \forall ( t,x ) \in [ 0,T ) \times \mathbb{R};
\label{eq: square-integrability condition :eq}
\end{equation}
Then, $\vec{F}^{ s,y, \vec{z} } ( t,x ) = \mathbb{E}_{t} [ \vec{F}^{ s,y, \vec{z} } ( T, {X}^{ t,x, \tilde{u} }_{T} ) ]$.
Furthermore, if ${U}^{ s,y, \cdot } ( t,x ) \in {C}^{1}$, $\mathcal{D}_{\zeta } {U}^{ s,y, \cdot } ( t,x ) \in {C}^{0}$ and
\begin{equation*}
\left\{ \begin{aligned}
& \mathbb{E} \bigg[ \sup_{ v \in [ t, t + \varepsilon ] } | \mathcal{D}_{\zeta } \vec{F}^{ t,x, \vec{z} } ( v, {X}^{ t,x, \zeta }_{v} ) |
                  + \int_{t}^{ t + \varepsilon } \| \mathscr{D}_{\zeta }^{1} \vec{F}^{ t, x, \vec{z} } ( v, {X}^{ t,x, \zeta }_{v} ) \|^{2} dv \bigg] < \infty, \\
& \exists \varepsilon \in ( 0, T-t ), ~ \forall \zeta \in \liminf_{ \varepsilon \downarrow 0 } \mathcal{Z}^{ t, \tilde{u} }_{\varepsilon }, ~ \forall ( t,x, \vec{z} ) \in [ 0,T ) \times \mathbb{R} \times \mathbb{R}^{n},
\end{aligned} \right.
\end{equation*}
then for all $( t,x ) \in [ 0,T ) \times \mathbb{R}$, $\zeta \in \liminf_{ \varepsilon \downarrow 0 } \mathcal{Z}^{ t, \tilde{u} }_{\varepsilon }$ and sufficiently small $\varepsilon \in ( 0, T-t ]$,
\begin{equation}
  J ( t,x, \tilde{u}^{ t, \varepsilon, \zeta } )
= J ( t,x, \tilde{u} ) + \big\langle \big( 1, \vec{\lambda }^{ t,x, \vec{m} ( t,x ) } ( t,x ) \big), \mathcal{D}_{\zeta } \vec{F}^{ t,x, \vec{m} ( t,x ) } ( t,x ) \big\rangle \varepsilon + o ( \varepsilon ).
\label{eq: first-order perturbation :eq}
\end{equation}
Therefore, $\tilde{u}$ is a CNEC, if for all $( t,x ) \in [ 0,T ) \times \mathbb{R}$,
\begin{equation}
\tilde{u} ( t,x ) \in \argmax_{ \zeta \in \liminf_{ \varepsilon \downarrow 0 } \mathcal{Z}^{ t, \tilde{u} }_{\varepsilon } } 
                      \big\langle \big( 1, \vec{\lambda }^{ t,x, \vec{m} ( t,x ) } ( t,x ) \big), \mathcal{D}_{\zeta } \vec{F}^{ t,x, \vec{m} ( t,x ) } ( t,x ) \big\rangle.
\label{eq: equilibrium condition :eq}
\end{equation}
\end{theorem}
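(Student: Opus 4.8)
The plan is to proceed in three stages: establish the stochastic (Feynman--Kac) representation asserted in the theorem, expand the perturbed objective to first order in $\varepsilon$, and then read off the equilibrium inequality \cref{eq: closed-loop Nash equilibrium control :eq} from the $\argmax$ condition \cref{eq: equilibrium condition :eq}. For the representation, I would fix $(s,y,\vec z)$ and apply It\^o's formula to $v \mapsto \vec F^{s,y,\vec z}(v, X^{t,x,\tilde u}_v)$ on $[t,T]$. Since $\vec F^{s,y,\vec z}$ is a classical $C^{1,2}$-solution of \cref{eq: extended HJB equations :eq}, the finite-variation part is $\int_t^T \mathcal{D}_{\tilde u(v, X_v)}\vec F^{s,y,\vec z}(v, X_v)\,dv = 0$, while the stochastic integrand is $\mathscr{D}_{\tilde u(v,X_v)}\vec F^{s,y,\vec z}(v,X_v)$. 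The square-integrability condition \cref{eq: square-integrability condition :eq} is exactly what guarantees this is a genuine martingale rather than a mere local one, so taking $\mathbb{E}_t[\cdot]$ and invoking the terminal datum $\vec F^{s,y,\vec z}(T,x) = (\Phi(s,y,x,\vec z), x, x^2, \ldots, x^n)$ yields $\vec F^{s,y,\vec z}(t,x) = \mathbb{E}_t[\vec F^{s,y,\vec z}(T, X^{t,x,\tilde u}_T)]$. Reading off the components identifies $\vec m(t,x)$ as the genuine conditional moment vector of $X^{t,x,\tilde u}_T$ and, specialising $(s,y,\vec z) = (t,x,\vec m(t,x))$, gives $U^{t,x,\vec m(t,x)}(t,x) = J(t,x,\tilde u)$.

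For the expansion I would exploit the flow property $X^{t, X^{0,x,u}_t, u}_s = X^{0,x,u}_s$ together with the tower rule. Writing the perturbed objective with the constant control $\zeta$ on $[t,t+\varepsilon)$ and $\tilde u$ thereafter, and conditioning at time $t+\varepsilon$, turns both the moment vector and the $\Phi$-expectation into functions of the restarted state: with $\vec z(\varepsilon) := \mathbb{E}_t[\vec m(t+\varepsilon, X^{t,x,\zeta}_{t+\varepsilon})] = \vec m^{\tilde u^{t,\varepsilon,\zeta}}(t,x)$ one obtains $J(t,x,\tilde u^{t,\varepsilon,\zeta}) = \mathbb{E}_t[U^{t,x,\vec z(\varepsilon)}(t+\varepsilon, X^{t,x,\zeta}_{t+\varepsilon})]$, since after $t+\varepsilon$ the dynamics revert to $\tilde u$ and $U^{t,x,\vec z}(t+\varepsilon,\cdot) = \mathbb{E}_{t+\varepsilon}[\Phi(t,x,X^{t,x,\tilde u}_T,\vec z)]$. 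Dynkin's formula under the $\zeta$-generator $\mathcal{D}_\zeta$, whose martingale part vanishes in expectation by the supplementary sup/integral hypothesis on $\mathcal{D}_\zeta \vec F$ and $\mathscr{D}^1_\zeta \vec F$, then supplies the two building-block expansions $\vec z(\varepsilon) = \vec m(t,x) + \mathcal{D}_\zeta\vec m(t,x)\,\varepsilon + o(\varepsilon)$ and $\mathbb{E}_t[U^{t,x,\vec w}(t+\varepsilon, X^{t,x,\zeta}_{t+\varepsilon})] = U^{t,x,\vec w}(t,x) + \mathcal{D}_\zeta U^{t,x,\vec w}(t,x)\,\varepsilon + o(\varepsilon)$ for a frozen parameter $\vec w$.

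The delicate step, which I expect to be the \emph{main obstacle}, is that the frozen parameter $\vec z(\varepsilon)$ itself depends on $\varepsilon$, so these two expansions must be composed without losing the $o(\varepsilon)$ control. Here I would Taylor-expand $U^{t,x,\cdot}(t,x)$ in its moment argument about $\vec m(t,x)$ using the gradient $\vec\lambda^{t,x,\vec m(t,x)}(t,x)$; the hypotheses $U^{s,y,\cdot}(t,x)\in C^1$ and $\mathcal{D}_\zeta U^{s,y,\cdot}(t,x)\in C^0$ are precisely what let the cross term $\mathcal{D}_\zeta U^{t,x,\vec z(\varepsilon)}(t,x)\,\varepsilon$ differ from $\mathcal{D}_\zeta U^{t,x,\vec m(t,x)}(t,x)\,\varepsilon$ only by $o(\varepsilon)$, and what convert the first-order change in $\vec z(\varepsilon)$ into $\langle \vec\lambda^{t,x,\vec m(t,x)}(t,x), \mathcal{D}_\zeta\vec m(t,x)\rangle\,\varepsilon$. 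Collecting terms and recombining $(\mathcal{D}_\zeta U, \mathcal{D}_\zeta\vec m) = \mathcal{D}_\zeta\vec F$ reproduces exactly \cref{eq: first-order perturbation :eq}.

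Finally, dividing $J(t,x,\tilde u) - J(t,x,\tilde u^{t,\varepsilon,\zeta})$ by $\varepsilon$ and letting $\varepsilon\downarrow 0$ leaves the $\liminf$ equal to $-\langle(1,\vec\lambda^{t,x,\vec m(t,x)}(t,x)), \mathcal{D}_\zeta\vec F^{t,x,\vec m(t,x)}(t,x)\rangle$. Because $\mathcal{D}_{\tilde u(t,x)}\vec F^{t,x,\vec m(t,x)}(t,x)=0$ by \cref{eq: extended HJB equations :eq}, the bracket vanishes at $\zeta=\tilde u(t,x)$; hence the $\argmax$ property \cref{eq: equilibrium condition :eq} forces the bracket to be $\le 0$ for every admissible $\zeta$, i.e.\ the $\liminf$ is nonnegative, which is precisely the defining inequality of a CNEC. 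The only analytic care needed throughout is the passage from local to true martingale and the uniform-in-$\varepsilon$ continuity estimates underlying each $o(\varepsilon)$, both of which are delivered by the stated integrability and smoothness assumptions.
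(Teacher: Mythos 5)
Your proposal is correct and follows essentially the same route as the paper: the paper omits the proof of this theorem but points to \Cref{lem: second-order perturbation}, whose proof is exactly your scheme (It\^o/Feynman--Kac representation via \cref{eq: square-integrability condition :eq}, the decomposition $J(t,x,\tilde u^{t,\varepsilon,\zeta}) = \mathbb{E}_t\big[U^{t,x,\mathbb{E}_t[\vec m(t+\varepsilon,X^{t,x,\zeta}_{t+\varepsilon})]}(t+\varepsilon,X^{t,x,\zeta}_{t+\varepsilon})\big]$, a Dynkin-type expansion under $\mathcal{D}_\zeta$, and a Taylor expansion in the moment argument using $U^{s,y,\cdot}\in C^1$ and $\mathcal{D}_\zeta U^{s,y,\cdot}\in C^0$) carried one order further. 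Your identification of the composition of the two expansions as the delicate step, and of the role of the two smoothness hypotheses there, matches the paper's own discussion in \Cref{rem: Lipschitz continuity in z}.
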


\begin{remark}\label{rem: Lipschitz continuity in z}
If $\Phi ( t,x,y, \cdot )$ is Lipschitz on $\mathbb{R}^{n}$, for which the Lipschitz continuity parameter is uniformly bounded for all $y \in \mathbb{R}$, as assumed in \cite{Wang-Liu-Bensoussan-Yiu-Wei-2025},
then $\vec{\lambda }^{ s,y, \vec{z} } ( t,x ) = \mathbb{E}_{t} [ {\partial }_{ \vec{z} } \Phi ( s,y, {X}^{ t,x, \tilde{u} }_{T}, \vec{z} ) ]$.
Moreover, given that $\vec{\lambda }^{ s,y, \vec{z} } \in {C}^{1,2}$,
one obtains that $\mathcal{D}_{ \tilde{u} ( \tau, {X}^{ t,x, \tilde{u} }_{\tau } ) } \vec{\lambda }^{ s,y, \vec{z} } ( \tau, {X}^{ t,x, \tilde{u} }_{\tau } ) = 0$ for a.e. $\tau \in [ t,T ]$, $\mathbb{P}$-a.s.,
since $\{ \vec{\lambda }^{ s,y, \vec{z} } ( \tau, {X}^{ t,x, \tilde{u} }_{\tau } ) \}_{ \tau \in [ t,T ] }$ is an $( \mathbb{F}, \mathbb{P} )$-martingale.
Hence, $\mathcal{D}_{ \tilde{u} ( t,x ) } \vec{\lambda }^{ s,y, \vec{z} } ( t,x )$ vanishes at the point $( t,x )$, provided that $\tilde{u}$ is continuous.
In this paper, due to the absence of such Lipschitz continuity in general, 
the additional smoothness assumptions ${U}^{ s,y, \cdot } ( t,x ) \in {C}^{1}$ and $\mathcal{D}_{\zeta } {U}^{ s,y, \cdot } ( t,x ) \in {C}^{0}$ are imposed for the asymptotic analysis.
See, e.g., the proof for \Cref{lem: second-order perturbation}.
Notably, the interchange of differentiation ${\partial }_{ \vec{z} } \mathcal{D}_{\zeta } {U}^{ s,y, \vec{z} } ( t,x ) = \mathcal{D}_{\zeta } {\partial }_{ \vec{z} } {U}^{ s,y, \vec{z} } ( t,x )$ is not needed in this paper.
\end{remark}

Indeed, \Cref{thm: sufficient condition for CNEC} with the arbitrariness of $( t,x ) \in [ 0,T ] \times \mathbb{R}$ provides the sufficient condition for the CNECs satisfying \cref{eq: stronger definition for CNEC :eq}.
For \Cref{def: closed-loop Nash equilibrium control}, the regions $[ 0,T ) \times \mathbb{R}$ and $[ 0,T ] \times \mathbb{R}$ that appear everywhere in \Cref{thm: sufficient condition for CNEC} can be replaced by
$\{ ( t, {X}^{ 0, {X}_{0}, \tilde{u} }_{t} ( \omega ) ): ( t, \omega ) \in [ 0,T ] \times \Omega \}$ and $\{ ( t, {X}^{ 0, {X}_{0}, \tilde{u} }_{t} ( \omega ) ): ( t, \omega ) \in [ 0,T ) \times \Omega \}$, respectively.
In particular, under the conditions in \Cref{thm: sufficient condition for CNEC}, $\tilde{u}$ is a CNEC
only if \cref{eq: equilibrium condition :eq} holds on $\{ ( t, {X}^{ 0, {X}_{0}, \tilde{u} }_{t} ( \omega ) ): dt \times d \mathbb{P}-a.e. ~ ( t, \omega ) \in [ 0,T ) \times \Omega \}$.
For brevity, hereafter, we consider PDEs implemented on $[ 0,T ] \times \mathbb{R}$ since we only study the sufficient conditions for equilibrium strategies.
Interested readers can establish parallel arguments for $( t,x ) \in \{ ( t, {X}^{ 0, {X}_{0}, \tilde{u} }_{t} ( \omega ) ): ( t, \omega ) \in [ 0,T ] \times \Omega \}$.

\begin{example}\label{exp: linear control problem}
For the control problem formulated by \cref{eq: linear control problem :eq},
in which the parameters $( A,B,C,D,F )$ are continuous functions on $[ 0,T ]$ with $D > 0$ and $\psi ( t, {z}_{2}, \ldots, {z}_{n} )$ exhibits polynomial growth in $( {z}_{2}, \ldots, {z}_{n} ) \in \mathbb{R}^{n-1}$,
\cite{Wang-Liu-Bensoussan-Yiu-Wei-2025} showed that the $x$-independent function
\begin{equation}
\tilde{u} ( t,x ) = \frac{ {\beta }_{t} }{ {D}_{t} } {e}^{ - \int_{t}^{T} {A}_{v} dv } - \frac{ {F}_{t} }{ {D}_{t} },
\label{eq: derived CNEC :eq}
\end{equation}
is a CNEC, provided that $\beta $ is a square-integrable function that fulfills the following system on $[ 0,T )$:
\begin{equation}
\left\{ \begin{aligned}
& 0 = \frac{ {B}_{t} }{ {D}_{t} } + {\beta }_{t} \sum_{ 1 \le j \le \frac{n}{2} } \frac{ (2j)! }{ (j-1)! } \Big( \frac{1}{2} \int_{t}^{T} | {\beta }_{s} |^{2} ds \Big)^{j-1} {\psi }_{ {z}_{2j} }, \\
& 0 > \sum_{ 1 \le j \le \frac{n}{2} } \frac{ (2j)! }{ (j-1)! } \Big( \frac{1}{2} \int_{t}^{T} | {\beta }_{s} |^{2} ds \Big)^{j-1} {\psi }_{ {z}_{2j} }, \\
& {\psi }_{ {z}_{2j} } = {\psi }_{ {z}_{2j} } \bigg( t, \int_{t}^{T} | {\beta }_{s} |^{2} ds, \ldots, {1}_{\{ n \in 2 \mathbb{N} \}} \frac{ n! }{  \frac{n}{2}! } \Big( \frac{1}{2} \int_{t}^{T} | {\beta }_{s} |^{2} ds \Big)^{ \frac{n}{2} } \bigg).
\end{aligned} \right.
\label{eq: beta system :eq}
\end{equation}
Interested readers can refer to \cite[Remarks 5.1 and 5.3]{Wang-Liu-Bensoussan-Yiu-Wei-2025} for the economic implications of this solution.
In particular, as shown in \cite[Example 5.6]{Wang-Liu-Bensoussan-Yiu-Wei-2025},
if $\psi = \sum_{j=2}^{n} (-1)^{j+1} {\kappa }_{j} {z}_{j}$ with all ${\kappa }_{j} \ge 0$ and $\sum_{ 1 \le j \le \frac{n}{2} } {\kappa }_{2j} > 0$,
then \cref{eq: beta system :eq} is reduced to solving ${B}_{t} {\beta }_{t} \ge 0$ and the polynomial equation
\begin{equation*}
\frac{1}{2} \int_{t}^{T} \bigg| \frac{ {B}_{s} }{ {D}_{s} } \bigg|^{2} ds = \int_{0}^{y} \bigg| \sum_{ 0 \le j \le \frac{n}{2} - 1 } {\kappa }_{2j+2} \frac{ {z}^{j} }{j!} \bigg|^{2} dz
\quad for \quad y = \frac{1}{2} \int_{t}^{T} | {\beta }_{s} |^{2} ds.
\end{equation*}
\end{example}

\section{Characterization of CSES}
\label{sec: Characterization of CSES}

Suppose that $\tilde{u} \in \mathcal{U}$ is a CNEC derived by applying \Cref{thm: sufficient condition for CNEC}, e.g., the derived CNECs in \cite[Section 5.1]{Wang-Liu-Bensoussan-Yiu-Wei-2025}.
In this section, we seek sufficient conditions for $\tilde{u}$ to be a CSES.
Provided that $\tilde{u} ( t,x ) \in \liminf_{ \varepsilon \downarrow 0 } \mathcal{Z}^{ t, \tilde{u} }_{\varepsilon }$ is the unique maximum point for \cref{eq: equilibrium condition :eq},
it follows from \cref{eq: first-order perturbation :eq} that
\begin{align*}
  \liminf_{\varepsilon \downarrow 0} \frac{1}{\varepsilon } \big( J ( t,x, \tilde{u} ) - J ( t,x, \tilde{u}^{ t, \varepsilon, \zeta } ) \big)
& = - \big\langle \big( 1, \vec{\lambda }^{ t,x, \vec{m} ( t,x ) } ( t,x ) \big), \mathcal{D}_{\zeta } \vec{F}^{ t,x, \vec{m} ( t,x ) } ( t,x ) \big\rangle \\
& > - \big\langle \big( 1, \vec{\lambda }^{ t,x, \vec{m} ( t,x ) } ( t,x ) \big), \mathcal{D}_{ \tilde{u} ( t,x ) } \vec{F}^{ t,x, \vec{m} ( t,x ) } ( t,x ) \big\rangle \\
& = 0, \quad \forall \zeta \ne \tilde{u} ( t,x ).
\end{align*}
Therefore, to examine whether $\tilde{u}$ is a CSES, it suffices to show that
\begin{equation}
\left\{ \begin{aligned}
& J ( t,x, \tilde{u} ) \ge J ( t,x, \tilde{u}^{ t, \varepsilon, \tilde{u} ( t,x ) } ), \\
& \forall \varepsilon \in ( 0, {\epsilon }_{t} ), ~ \exists {\epsilon }_{t} \in ( 0, T-t ), ~ \mathbb{P}-a.s., ~ a.e. ~ t \in [ 0,T ).
\end{aligned} \right.
\label{eq: sufficient condition for CSEC :eq}
\end{equation}
However, since plugging \cref{eq: extended HJB equations :eq} into \cref{eq: first-order perturbation :eq} yields $J ( t,x, \tilde{u}^{ t, \varepsilon, \tilde{u} ( t,x ) } ) = J ( t,x, \tilde{u} ) + o ( \varepsilon )$,
we should investigate the higher-order expansion for $J ( t,x, \tilde{u}^{ t, \varepsilon, \tilde{u} ( t,x ) } ) - J ( t,x, \tilde{u} )$ with respect to $\varepsilon $ to establish the second-order derivative equilibrium condition.

\begin{lemma}\label{lem: higher-order expansion rule}
Suppose that the $\mathbb{R}^{n+1}$-valued function $\vec{f} \in {C}^{k,2k}$ for some positive integer $k$, and
\begin{equation*}
\left\{ \begin{aligned}
& \sum_{j=1}^{k} \mathbb{E} \bigg[ \sup_{ v \in [ t, t + \varepsilon ] } | \mathcal{D}_{\zeta }^{j} \vec{f} ( v, {X}^{ t,x, \zeta }_{v} ) |
                                 + \int_{t}^{ t + \varepsilon } \| \mathscr{D}_{\zeta }^{j} \vec{f} ( v, {X}^{ t,x, \zeta }_{v} ) \|^{2} dv \bigg] < \infty, \\
& \exists \varepsilon \in ( 0, T-t ), ~ \forall \zeta \in \liminf_{ \varepsilon \downarrow 0 } \mathcal{Z}^{ t, \tilde{u} }_{\varepsilon }, ~ \forall ( t,x ) \in [ 0,T ) \times \mathbb{R}.
\end{aligned} \right.
\end{equation*}
Then, for all $( t,x ) \in [ 0,T ) \times \mathbb{R}$, $\zeta \in \liminf_{ \varepsilon \downarrow 0 } \mathcal{Z}^{ t, \tilde{u} }_{\varepsilon }$ and sufficiently small $\varepsilon \in ( 0, T-t ]$,
\begin{equation*}
\mathbb{E}_{t} [ \vec{f} ( t + \varepsilon, {X}^{ t,x, \zeta }_{ t + \varepsilon } ) ] = \vec{f} ( t,x ) + \sum_{j=1}^{k} \frac{1}{j!} \mathcal{D}_{\zeta }^{j} \vec{f} ( t,x ) {\varepsilon }^{j} + o ( {\varepsilon }^{j} ).
\end{equation*}
\end{lemma}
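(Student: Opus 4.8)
The plan is to prove the expansion by iterating It\^o's (Dynkin's) formula, reading off the coefficients $\tfrac{1}{j!}\mathcal{D}_{\zeta}^{j}\vec{f}(t,x)$ as the successive It\^o--Taylor terms (the error term, written $o(\varepsilon^{j})$ in the statement, being understood as $o(\varepsilon^{k})$ at the top order). A first simplification: since $x\in\mathbb{R}$ is deterministic and, on $[t,t+\varepsilon]$, the process $X^{t,x,\zeta}$ is the strong solution driven by the increments of $W$ after time $t$ with the deterministic coefficients $b(\cdot,\cdot,\zeta),\sigma(\cdot,\cdot,\zeta)$, it is measurable with respect to those increments, which are independent of $\mathcal{F}_{t}$; hence $\mathbb{E}_{t}$ reduces to $\mathbb{E}$ on the quantities of interest, and the functions $\phi_{j}(v):=\mathbb{E}_{t}[\mathcal{D}_{\zeta}^{j}\vec{f}(v,X^{t,x,\zeta}_{v})]$, $j=0,\dots,k$, are deterministic in $v\in[t,t+\varepsilon]$. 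Everything is carried out componentwise on the $\mathbb{R}^{n+1}$-valued $\vec{f}$, the norms in the hypotheses controlling all components simultaneously. I would also record the smoothness bookkeeping: each application of $\mathcal{D}_{\zeta}$ consumes one time-derivative and two space-derivatives, so $\vec{f}\in C^{k,2k}$ gives $\mathcal{D}_{\zeta}^{j}\vec{f}\in C^{k-j,2k-2j}$; thus $\mathcal{D}_{\zeta}^{j}\vec{f}\in C^{1,2}$ for $j\le k-1$ and $\mathcal{D}_{\zeta}^{k}\vec{f}\in C^{0,0}$, which is exactly the regularity needed below.

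Next I would establish the one-step recursion. For each $j=0,\dots,k-1$, applying It\^o's formula to $\mathcal{D}_{\zeta}^{j}\vec{f}(s,X^{t,x,\zeta}_{s})$ on $[t,v]$ produces a drift with integrand $\mathcal{D}_{\zeta}(\mathcal{D}_{\zeta}^{j}\vec{f})=\mathcal{D}_{\zeta}^{j+1}\vec{f}$ and a stochastic integral with integrand $\partial_{x}(\mathcal{D}_{\zeta}^{j}\vec{f})\sigma^{\zeta}=\mathscr{D}_{\zeta}^{j+1}\vec{f}$. The square-integrability hypothesis $\mathbb{E}\int_{t}^{t+\varepsilon}\|\mathscr{D}_{\zeta}^{j+1}\vec{f}(v,X^{t,x,\zeta}_{v})\|^{2}dv<\infty$ makes this stochastic integral a true martingale, so its $\mathbb{E}_{t}$ vanishes, while the sup-integrability $\mathbb{E}[\sup_{v}|\mathcal{D}_{\zeta}^{j+1}\vec{f}(v,X^{t,x,\zeta}_{v})|]<\infty$ licenses Fubini; together they yield
\[
\phi_{j}(v)=\mathcal{D}_{\zeta}^{j}\vec{f}(t,x)+\int_{t}^{v}\phi_{j+1}(r)\,dr,\qquad j=0,\dots,k-1.
\]
Then I would unfold this recursion from $j=0$: repeated substitution turns $\phi_{0}(t+\varepsilon)=\mathbb{E}_{t}[\vec{f}(t+\varepsilon,X^{t,x,\zeta}_{t+\varepsilon})]$ into a sum of iterated integrals of the constants $\mathcal{D}_{\zeta}^{j}\vec{f}(t,x)$ over nested simplices. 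Since $\{t\le v_{j}\le\dots\le v_{1}\le t+\varepsilon\}$ has volume $\varepsilon^{j}/j!$, this gives $\phi_{0}(t+\varepsilon)=\sum_{j=0}^{k-1}\frac{1}{j!}\mathcal{D}_{\zeta}^{j}\vec{f}(t,x)\varepsilon^{j}+R_{k}$ with remainder $R_{k}=\int_{t}^{t+\varepsilon}\!\cdots\!\int_{t}^{v_{k-1}}\phi_{k}(v_{k})\,dv_{k}\cdots dv_{1}$, the $k$-fold nested integral of $\phi_{k}$.

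The main work, and the step I expect to be the real obstacle, is the remainder estimate $R_{k}=\frac{1}{k!}\mathcal{D}_{\zeta}^{k}\vec{f}(t,x)\varepsilon^{k}+o(\varepsilon^{k})$. The key point is continuity of $\phi_{k}$ at $v=t$: because $\mathcal{D}_{\zeta}^{k}\vec{f}\in C^{0,0}$, the paths satisfy $X^{t,x,\zeta}_{v}\to x$ as $v\downarrow t$, and $\sup_{v}|\mathcal{D}_{\zeta}^{k}\vec{f}(v,X^{t,x,\zeta}_{v})|$ is integrable, dominated convergence gives $\phi_{k}(v)\to\mathcal{D}_{\zeta}^{k}\vec{f}(t,x)=\phi_{k}(t)$. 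Writing $\phi_{k}(v)=\mathcal{D}_{\zeta}^{k}\vec{f}(t,x)+\rho(v)$ with $\eta(\varepsilon):=\sup_{v\in[t,t+\varepsilon]}|\rho(v)|\to0$, the constant part integrates to $\frac{1}{k!}\mathcal{D}_{\zeta}^{k}\vec{f}(t,x)\varepsilon^{k}$ and the $\rho$-part is bounded in norm by $\eta(\varepsilon)\,\varepsilon^{k}/k!=o(\varepsilon^{k})$. Recognizing the $j=0$ term of the unfolded sum as $\vec{f}(t,x)$ and merging $R_{k}$ back into the sum then recovers the claimed expansion. The delicate points throughout are precisely the interchange of $\mathbb{E}_{t}$ with time-integration and the vanishing of each stochastic integral, both consumed exactly once per level by the two integrability hypotheses, together with the uniform-in-$v$ smallness of $\rho$ needed to keep the error genuinely $o(\varepsilon^{k})$. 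An equivalent induction on $k$ (expanding $\mathbb{E}_{t}[\mathcal{D}_{\zeta}\vec{f}(v,X^{t,x,\zeta}_{v})]$ to order $k-1$ and integrating over $v$) would meet the same obstacle when integrating a non-uniform remainder, which the present unfolding confines to the single innermost integral.
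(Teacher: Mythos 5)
Your proposal is correct and takes essentially the same route as the paper's own proof: both iterate It\^o's formula level by level (using the square-integrability hypothesis to make the stochastic integrals true martingales and the sup-integrability to interchange $\mathbb{E}_t$ with time integration), telescope into nested time integrals whose simplex volumes give the $\varepsilon^{j}/j!$ coefficients, and bound the innermost remainder by the continuity of $v \mapsto \mathbb{E}_t\big[\,\big|\mathcal{D}_{\zeta}^{k}\vec{f}(v,X^{t,x,\zeta}_{v}) - \mathcal{D}_{\zeta}^{k}\vec{f}(t,x)\big|\,\big]$ at $v=t$. Your explicit recursion-and-unfolding bookkeeping and the observation that $\mathbb{E}_t$ reduces to $\mathbb{E}$ by independence of the post-$t$ Brownian increments are only presentational refinements of the same argument.
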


\begin{proof}
For $k = 1$, after applying It\^o's rule to $\{ \vec{f} ( s, {X}^{ t,x, \zeta }_{s} ) \}_{ s \in [ t, t + \varepsilon ] }$ and taking the conditional expectation, one obtains
\begin{equation*}
| \mathbb{E}_{t} [ \vec{f} ( t + \varepsilon, {X}^{ t,x, \zeta }_{ t + \varepsilon } ) - \vec{f} ( t,x ) - \mathcal{D}_{\zeta }^{j} \vec{f} ( t,x ) \varepsilon ] |
\le \int_{t}^{ t + \varepsilon } \mathbb{E}_{t} [ | \mathcal{D}_{\zeta } \vec{f} ( s, {X}^{ t,x, \zeta }_{s} ) - \mathcal{D}_{\zeta } \vec{f} ( t,x ) | ] ds 
  = o ( \varepsilon ), 
\end{equation*}
where the last equality holds because 
$\mathbb{E}_{t} [ | \mathcal{D}_{\zeta } \vec{f} ( s, {X}^{ t,x, \zeta }_{s} ) - \mathcal{D}_{\zeta } \vec{f} ( t,x ) | ]$ is continuous in $s$ (due to the given uniform integrability)
and vanishes at $s = t$.
Similarly, for $k=2$,
\begin{align*}
& \bigg| \mathbb{E}_{t} \bigg[ \vec{f} ( t + \varepsilon, {X}^{ t,x, \zeta }_{ t + \varepsilon } ) - \vec{f} ( t,x ) 
                             - \mathcal{D}_{\zeta }^{j} \vec{f} ( t,x ) \varepsilon - \frac{1}{2} \mathcal{D}_{\zeta }^{2} \vec{f} ( t,x ) {\varepsilon }^{2} \bigg] \bigg| \\
& = \bigg| \mathbb{E}_{t} \bigg[ \int_{t}^{ t + \varepsilon } \big( \mathcal{D}_{\zeta } \vec{f} ( s, {X}^{ t,x, \zeta }_{s} ) - \mathcal{D}_{\zeta } \vec{f} ( t,x ) \big) ds 
         - \mathcal{D}_{\zeta }^{2} \vec{f} ( t,x ) \int_{t}^{ t + \varepsilon } ds \int_{t}^{s} dv \bigg] \bigg| \\
& \le \int_{t}^{ t + \varepsilon } ds \int_{t}^{s} \mathbb{E}_{t} [ | \mathcal{D}_{\zeta }^{2} \vec{f} ( v, {X}^{ t,x, \zeta }_{v} ) - \mathcal{D}_{\zeta }^{2} \vec{f} ( t,x ) | ] dv \\
& = o ( {\varepsilon }^{2} ).
\end{align*}
In the same manner, one can derive the desired result for $\vec{f} \in {C}^{k,2k}$ with an integer $k \ge 3$.
\end{proof}

\begin{lemma}\label{lem: second-order perturbation}
Suppose that $\tilde{u} \in \mathcal{U}$.
Assume that $\vec{F}^{ s,y, \vec{z} } = ( {U}^{ s,y, \vec{z} }, \vec{m} ) \in {C}^{2,4}$ satisfies the conditions in \Cref{thm: sufficient condition for CNEC} and \Cref{lem: higher-order expansion rule} with $k=2$,
and that $\mathcal{D}_{\zeta }^{k} {U}^{ s,y, \cdot } ( t,x ) \in {C}^{2-k}$ for $k = 0,1,2$.
Then, for all $( t,x ) \in [ 0,T ) \times \mathbb{R}$, $\zeta \in \liminf_{ \varepsilon \downarrow 0 } \mathcal{Z}^{ t, \tilde{u} }_{\varepsilon }$ and sufficiently small $\varepsilon \in ( 0, T-t ]$,
\begin{align}
  J ( t,x, \tilde{u}^{ t, \varepsilon, \zeta } )
& = J ( t,x, \tilde{u} ) 
  + \big\langle \big( 1, \vec{\lambda }^{ t,x, \vec{m} ( t,x ) } ( t,x ) \big), \mathcal{D}_{\zeta } \vec{F}^{ t,x, \vec{m} ( t,x ) } ( t,x ) \big\rangle \varepsilon \notag \\
& \quad + \frac{1}{2} \Big( \big\langle \big( 1, \vec{\lambda }^{ t,x, \vec{m} ( t,x ) } ( t,x ) \big), \mathcal{D}_{\zeta }^{2} \vec{F}^{ t,x, \vec{m} ( t,x ) } ( t,x ) \big\rangle 
                          + \big\langle \big( \mathcal{D}_{\zeta } \vec{m} ( t,x ) \big) {\mu }^{ t,x, \vec{m} ( t,x ) } ( t,x ), \mathcal{D}_{\zeta } \vec{m} ( t,x ) \big\rangle \notag \\
& \quad \qquad            + 2 \langle {\partial }_{ \vec{z} } \mathcal{D}_{\zeta } {U}^{ t,x, \vec{z} } ( t,x ) |_{ \vec{z} = \vec{m} ( t,x ) }, \mathcal{D}_{\zeta } \vec{m} ( t,x ) \rangle \Big) {\varepsilon }^{2}
  + o ( {\varepsilon }^{2} ).
\label{eq: second-order perturbation :eq}
\end{align}
\end{lemma}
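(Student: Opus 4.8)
The plan is to reduce $J(t,x,\tilde{u}^{t,\varepsilon,\zeta})$ to a conditional expectation of the equilibrium value function at time $t+\varepsilon$ and then expand in two stages. Write $X^{t,x,\tilde{u}^{t,\varepsilon,\zeta}}$ for the state driven by $\zeta$ on $[t,t+\varepsilon)$ and by $\tilde{u}$ afterwards. Since the flow uses $\tilde{u}$ from $t+\varepsilon$ on, the identity $X^{t,x,\tilde{u}^{t,\varepsilon,\zeta}}_{T} = X^{t+\varepsilon, X^{t,x,\tilde{u}^{t,\varepsilon,\zeta}}_{t+\varepsilon}, \tilde{u}}_{T}$ together with the tower rule gives, for each $j$, $\mathbb{E}_{t}[(X^{t,x,\tilde{u}^{t,\varepsilon,\zeta}}_{T})^{j}] = \mathbb{E}_{t}[m_{j}(t+\varepsilon, X^{t,x,\tilde{u}^{t,\varepsilon,\zeta}}_{t+\varepsilon})]$, so the (deterministic) moment vector fed into the last slot of $\Phi$ equals $\vec{M}_{\varepsilon} := \mathbb{E}_{t}[\vec{m}(t+\varepsilon, X^{t,x,\tilde{u}^{t,\varepsilon,\zeta}}_{t+\varepsilon})]$. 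Freezing that slot at a dummy $\vec{z}$ and invoking the representation $U^{t,x,\vec{z}}(t',x') = \mathbb{E}_{t'}[\Phi(t,x, X^{t',x',\tilde{u}}_{T}, \vec{z})]$ from \Cref{thm: sufficient condition for CNEC}, the same tower argument yields the crucial identity $J(t,x,\tilde{u}^{t,\varepsilon,\zeta}) = \mathbb{E}_{t}[U^{t,x,\vec{z}}(t+\varepsilon, X^{t,x,\tilde{u}^{t,\varepsilon,\zeta}}_{t+\varepsilon})]\big|_{\vec{z}=\vec{M}_{\varepsilon}}$. This isolates the two sources of $\varepsilon$-dependence: the space--time argument $(t+\varepsilon, X^{t,x,\tilde{u}^{t,\varepsilon,\zeta}}_{t+\varepsilon})$ and the frozen moment parameter $\vec{M}_{\varepsilon}$.

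Next I would expand both ingredients to order $\varepsilon^{2}$. Applying \Cref{lem: higher-order expansion rule} with $k=2$ to the lower block $\vec{m}$ gives $\vec{M}_{\varepsilon} = \vec{z}_{0} + \mathcal{D}_{\zeta}\vec{m}(t,x)\,\varepsilon + \tfrac{1}{2}\mathcal{D}_{\zeta}^{2}\vec{m}(t,x)\,\varepsilon^{2} + o(\varepsilon^{2})$, where $\vec{z}_{0} := \vec{m}(t,x)$; in particular $\delta := \vec{M}_{\varepsilon} - \vec{z}_{0} = O(\varepsilon)$. For the $U$-block I would deliberately perform the $\varepsilon$-expansion first, at the frozen value of $\vec{z}$: \Cref{lem: higher-order expansion rule} applied to $U^{t,x,\vec{z}}$ (with $\vec{z}$ held fixed) yields $\mathbb{E}_{t}[U^{t,x,\vec{z}}(t+\varepsilon, X^{t,x,\tilde{u}^{t,\varepsilon,\zeta}}_{t+\varepsilon})] = U^{t,x,\vec{z}}(t,x) + \mathcal{D}_{\zeta}U^{t,x,\vec{z}}(t,x)\,\varepsilon + \tfrac{1}{2}\mathcal{D}_{\zeta}^{2}U^{t,x,\vec{z}}(t,x)\,\varepsilon^{2} + o(\varepsilon^{2})$. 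Only afterwards do I Taylor-expand each coefficient in $\vec{z}$ about $\vec{z}_{0}$ and substitute $\vec{z}=\vec{M}_{\varepsilon}$. Because this order of operations produces $\partial_{\vec{z}}\big(\mathcal{D}_{\zeta}U^{t,x,\vec{z}}(t,x)\big)\big|_{\vec{z}_{0}}$ rather than $\mathcal{D}_{\zeta}\big(\partial_{\vec{z}}U\big) = \mathcal{D}_{\zeta}\vec{\lambda}$, the troublesome interchange of $\partial_{\vec{z}}$ and $\mathcal{D}_{\zeta}$ flagged in \Cref{rem: Lipschitz continuity in z} never has to be justified.

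Collecting terms to order $\varepsilon^{2}$ then proceeds by bookkeeping. The $\varepsilon^{0}$ term is $U^{t,x,\vec{z}_{0}}(t,x) = J(t,x,\tilde{u})$. The $\varepsilon^{1}$ term combines $\mathcal{D}_{\zeta}U^{t,x,\vec{z}_{0}}(t,x)$ with $\langle \vec{\lambda}^{t,x,\vec{z}_{0}}(t,x), \mathcal{D}_{\zeta}\vec{m}(t,x)\rangle$ (the leading part of $\langle\vec{\lambda},\delta\rangle$), reproducing $\langle(1,\vec{\lambda}),\mathcal{D}_{\zeta}\vec{F}\rangle$ exactly as in \cref{eq: first-order perturbation :eq}. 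The $\varepsilon^{2}$ term receives four contributions: $\tfrac{1}{2}\mathcal{D}_{\zeta}^{2}U$ from the frozen-$\vec{z}$ expansion; $\tfrac{1}{2}\langle\vec{\lambda}, \mathcal{D}_{\zeta}^{2}\vec{m}\rangle$ from pairing $\vec{\lambda}$ with the second-order piece of $\delta$; the quadratic Taylor term $\tfrac{1}{2}\langle(\mathcal{D}_{\zeta}\vec{m})\mu^{t,x,\vec{z}_{0}}(t,x), \mathcal{D}_{\zeta}\vec{m}\rangle$ coming from $\tfrac{1}{2}\delta^{\top}\mu\delta$; and the cross term $\langle\partial_{\vec{z}}\mathcal{D}_{\zeta}U^{t,x,\vec{z}}(t,x)|_{\vec{z}_{0}}, \mathcal{D}_{\zeta}\vec{m}\rangle$ obtained by differentiating the $\varepsilon$-coefficient $\mathcal{D}_{\zeta}U$ in $\vec{z}$ against the leading part of $\delta$. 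Using $\langle(1,\vec{\lambda}),\mathcal{D}_{\zeta}^{2}\vec{F}\rangle = \mathcal{D}_{\zeta}^{2}U + \langle\vec{\lambda}, \mathcal{D}_{\zeta}^{2}\vec{m}\rangle$, assembling these four pieces reproduces exactly the bracketed expression in \cref{eq: second-order perturbation :eq}.

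The main obstacle is not the algebra but the uniformity of the remainders when the frozen parameter is finally set to the moving value $\vec{M}_{\varepsilon}$. The $\varepsilon$-expansion of $\mathbb{E}_{t}[U^{t,x,\vec{z}}(t+\varepsilon, \cdot)]$ delivered by \Cref{lem: higher-order expansion rule} carries a remainder that a priori depends on $\vec{z}$, and I must ensure it stays $o(\varepsilon^{2})$ along the curve $\vec{z}=\vec{M}_{\varepsilon}=\vec{z}_{0}+O(\varepsilon)$; symmetrically, the $\vec{z}$-Taylor remainders must be controlled uniformly in $\varepsilon$. This is precisely what the calibrated hypotheses $\mathcal{D}_{\zeta}^{k}U^{s,y,\cdot}(t,x)\in C^{2-k}$ for $k=0,1,2$ supply: the $C^{2}$-dependence of $U$ on $\vec{z}$ furnishes the Hessian term, the $C^{1}$-dependence of $\mathcal{D}_{\zeta}U$ furnishes the cross term, and mere continuity of $\mathcal{D}_{\zeta}^{2}U$ in $\vec{z}$ permits replacing $\vec{M}_{\varepsilon}$ by $\vec{z}_{0}$ at order $\varepsilon^{2}$. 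Combined with the integrability bounds inherited from \Cref{thm: sufficient condition for CNEC} and \Cref{lem: higher-order expansion rule}, these regularities let the two expansions be composed and truncated at $\varepsilon^{2}$, completing the argument.
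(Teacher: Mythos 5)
Your proposal is correct and follows essentially the same route as the paper's own proof: the tower-property identity $J(t,x,\tilde{u}^{t,\varepsilon,\zeta}) = \mathbb{E}_{t}[U^{t,x,\vec{M}_{\varepsilon}}(t+\varepsilon, X^{t,x,\zeta}_{t+\varepsilon})]$, the application of \Cref{lem: higher-order expansion rule} with $k=2$ to both $\vec{m}$ and the frozen-parameter $U$-block, the subsequent $\vec{z}$-Taylor expansion of the coefficients (which, as you note and as \Cref{rem: Lipschitz continuity in z} confirms, avoids any interchange of $\partial_{\vec{z}}$ and $\mathcal{D}_{\zeta}$), and the identical collection of terms into \cref{eq: second-order perturbation :eq}. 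Your explicit attention to the uniformity of the remainders along $\vec{z}=\vec{M}_{\varepsilon}$ is a point the paper leaves implicit, but it does not change the structure of the argument.
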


\begin{proof}
By applying \Cref{lem: higher-order expansion rule} to $\vec{F}^{ s,y, \vec{z} } \in {C}^{2,4}$, one obtains
\begin{align*}
& J ( t,x, \tilde{u}^{ t, \varepsilon, \zeta } ) - J ( t,x, \tilde{u} ) \\
& = \mathbb{E}_{t} \big[ \Phi \big( t,x, {X}^{ t,x, \tilde{u}^{ t,x, \zeta } }_{T}, \vec{m}^{ \tilde{u}^{ t, \varepsilon, \zeta } } ( t,x ) \big) \big]
  - \mathbb{E}_{t} \big[ \Phi \big( t,x, {X}^{ t,x, \tilde{u} }_{T}, \vec{m} ( t,x ) \big) \big] \\
& = \mathbb{E}_{t} [ {U}^{ t,x, \mathbb{E}_{t} [ \vec{m} ( t + \varepsilon, {X}^{ t,x, \zeta }_{ t + \varepsilon } ) ] } ( t + \varepsilon, {X}^{ t,x, \zeta }_{ t + \varepsilon } ) ]
  - {U}^{ t,x, \vec{m} ( t,x ) } ( t,x ) \\
& = \mathbb{E}_{t} [ {U}^{ t,x, \mathbb{E}_{t} [ \vec{m} ( t + \varepsilon, {X}^{ t,x, \zeta }_{ t + \varepsilon } ) ] } ( t + \varepsilon, {X}^{ t,x, \zeta }_{ t + \varepsilon } ) ]
  - {U}^{ t,x, \mathbb{E}_{t} [ \vec{m} ( t + \varepsilon, {X}^{ t,x, \zeta }_{ t + \varepsilon } ) ] } ( t,x ) \\
& \quad + {U}^{ t,x, \vec{m} ( t,x ) + \mathcal{D}_{\zeta } \vec{m} ( t,x ) \varepsilon + \frac{1}{2} \mathcal{D}_{\zeta }^{2} \vec{m} ( t,x ) {\varepsilon }^{2} } ( t,x )
        - {U}^{ t,x, \vec{m} ( t,x ) } ( t,x )
        + o ( {\varepsilon }^{2} )  \\
& = \mathcal{D}_{\zeta } {U}^{ t,x, \mathbb{E}_{t} [ \vec{m} ( t + \varepsilon, {X}^{ t,x, \zeta }_{ t + \varepsilon } ) ] } ( t,x ) \varepsilon
  + \frac{1}{2} \mathcal{D}_{\zeta }^{2} {U}^{ t,x, \mathbb{E}_{t} [ \vec{m} ( t + \varepsilon, {X}^{ t,x, \zeta }_{ t + \varepsilon } ) ] } ( t,x ) {\varepsilon }^{2} \\
& \quad + \Big\langle \vec{\lambda }^{ t,x, \vec{m} ( t,x ) } ( t,x ), \mathcal{D}_{\zeta } \vec{m} ( t,x ) \varepsilon + \frac{1}{2} \mathcal{D}_{\zeta }^{2} \vec{m} ( t,x ) {\varepsilon }^{2} \Big\rangle \\
& \quad + \frac{1}{2} \big\langle \big( \mathcal{D}_{\zeta } \vec{m} ( t,x ) \big) {\mu }^{ t,x, \vec{m} ( t,x ) } ( t,x ), \mathcal{D}_{\zeta } \vec{m} ( t,x ) \big\rangle {\varepsilon }^{2}
        + o ( {\varepsilon }^{2} ),
\end{align*}
where
\begin{align*}
& \mathcal{D}_{\zeta } {U}^{ t,x, \mathbb{E}_{t} [ \vec{m} ( t + \varepsilon, {X}^{ t,x, \zeta }_{ t + \varepsilon } ) ] } ( t,x ) 
= \mathcal{D}_{\zeta } {U}^{ t,x, \vec{m} ( t,x ) } ( t,x ) 
+ \langle {\partial }_{ \vec{z} } \mathcal{D}_{\zeta } {U}^{ t,x, \vec{z} } ( t,x ) |_{ \vec{z} = \vec{m} ( t,x ) }, \mathcal{D}_{\zeta } \vec{m} ( t,x ) \rangle \varepsilon + o ( \varepsilon ), \\
& \mathcal{D}_{\zeta }^{2} {U}^{ t,x, \mathbb{E}_{t} [ \vec{m} ( t + \varepsilon, {X}^{ t,x, \zeta }_{ t + \varepsilon } ) ] } ( t,x ) 
= \mathcal{D}_{\zeta }^{2} {U}^{ t,x, \vec{m} ( t,x ) } ( t,x ) + o (1).
\end{align*}
By rearranging the terms according to the degree of $\varepsilon $, one immediately obtains \cref{eq: second-order perturbation :eq}.
\end{proof}

\Cref{lem: second-order perturbation} implies that a sufficient condition for a CSES is \cref{eq: equilibrium condition :eq} with
\begin{align*}
0 & > \big\langle \big( 1, \vec{\lambda }^{ t,x, \vec{m} ( t,x ) } ( t,x ) \big), 
                  \mathcal{D}_{\zeta }^{2} \vec{F}^{ t,x, \vec{m} ( t,x ) } ( t,x ) \big\rangle
    + \big\langle \big( \mathcal{D}_{\zeta } \vec{m} ( t,x ) \big) {\mu }^{ t,x, \vec{m} ( t,x ) } ( t,x ), \mathcal{D}_{\zeta } \vec{m} ( t,x ) \big\rangle \\
  & \quad + 2 \langle {\partial }_{ \vec{z} } \mathcal{D}_{\zeta } {U}^{ t,x, \vec{z} } ( t,x ) |_{ \vec{z} = \vec{m} ( t,x ) }, \mathcal{D}_{\zeta } \vec{m} ( t,x ) \rangle
\end{align*}
for all $\zeta \in \liminf_{ \varepsilon \downarrow 0 } \mathcal{Z}^{ t, \tilde{u} }_{\varepsilon }$ and $( t,x ) \in [ 0,T ) \times \mathbb{R}$.
However, in what follows, we show that the abovementioned strict inequality does not necessarily hold in the case where $\tilde{u}_{x} ( t, \cdot ) \equiv 0$; see the following \Cref{thm: second-order derivative condition for CSES}.
Thus, the abovementioned condition cannot be used to examine whether the CNEC derived in \Cref{exp: linear control problem} is a CSES.

\begin{theorem}\label{thm: second-order derivative condition for CSES}
Suppose that $\tilde{u} \in \mathcal{U} \cap {C}^{1,2}$.
Assume that the following properties hold:
\begin{itemize}
\item $\vec{F}^{ s,y, \vec{z} } = ( {U}^{ s,y, \vec{z} }, \vec{m} ) \in {C}^{2,4}$ satisfies the conditions in \Cref{thm: sufficient condition for CNEC} and \Cref{lem: higher-order expansion rule} with $k=2$.
\item $\mathcal{D}_{\zeta }^{k} {U}^{ s,y, \cdot } ( t,x ) \in {C}^{2-k}$ for $k = 0,1,2$.
\item For all $( t,x ) \in [ 0,T ) \times \mathbb{R}$, $\tilde{u} ( t,x ) \in \liminf_{ \varepsilon \downarrow 0 } \mathcal{Z}^{ t, \tilde{u} }_{\varepsilon }$
      fulfills the following first-order and second-order derivative optimality conditions for \cref{eq: equilibrium condition :eq}:
\begin{align}
& 0 = \big\langle \big( 1, \vec{\lambda }^{ t,x, \vec{m} ( t,x ) } ( t,x ) \big), \vec{H}^{ t,x, \vec{m} ( t,x ) }_{u} \big( t,x, \tilde{u} ( t,x ) \big) \big\rangle, 
\label{eq: first-order derivative condition :eq}
\\
& 0 > \big\langle \big( 1, \vec{\lambda }^{ t,x, \vec{m} ( t,x ) } ( t,x ) \big), \vec{H}^{ t,x, \vec{m} ( t,x ) }_{uu} ( t,x, \zeta ) \big\rangle, 
  \quad \forall \zeta \in \liminf_{ \varepsilon \downarrow 0 } \mathcal{Z}^{ t, \tilde{u} }_{\varepsilon }.
\label{eq: second-order derivative condition :eq}
\end{align}
\end{itemize}
Then,
\begin{align*}
& J ( t,x, \tilde{u}^{ t, \varepsilon, \tilde{u} ( t,x ) } ) - J ( t,x, \tilde{u} ) \\ 
& = - \frac{1}{2} {\varepsilon }^{2} \hat{\sigma }^{ \tilde{u} ( t,x ) } ( t,x ) | \tilde{u}_{x} ( t,x ) |^{2}
          \big\langle \big( 1, \vec{\lambda }^{ t,x, \vec{m} ( t,x ) } ( t,x ) \big), \vec{H}^{ t,x, \vec{m} ( t,x ) }_{uu} \big( t,x, \tilde{u} ( t,x ) \big) \big\rangle \\
& \quad - {\varepsilon }^{2} \hat{\sigma }^{ \tilde{u} ( t,x ) } ( t,x ) \tilde{u}_{x} ( t,x ) 
          \big\langle \big( 1, \vec{\lambda }^{ t,x, \vec{m} ( t,x ) } ( t,x ) \big), \vec{H}^{ t,x, \vec{m} ( t,x ) }_{xu} \big( t,x, \tilde{u} ( t,x ) \big) \big\rangle
        + o ( {\varepsilon }^{2} ) \\
& = \frac{1}{2} {\varepsilon }^{2} \hat{\sigma }^{ \tilde{u} ( t,x ) } ( t,x ) | \tilde{u}_{x} ( t,x ) |^{2}
    \big\langle \big( 1, \vec{\lambda }^{ t,x, \vec{m} ( t,x ) } ( t,x ) \big), \vec{H}^{ t,x, \vec{m} ( t,x ) }_{uu} \big( t,x, \tilde{u} ( t,x ) \big) \big\rangle \\
& \quad - {\varepsilon }^{2} \hat{\sigma }^{ \tilde{u} ( t,x ) } ( t,x ) \tilde{u}_{x} ( t,x ) 
          \big\langle \big( 1, \vec{\lambda }^{ t,x, \vec{m} ( t,x ) } ( t,x ) \big), 
                      {\partial }_{x} \vec{H}^{ t,y, \vec{z} }_{u} \big( t,x, \tilde{u} ( t,x ) \big) \big\rangle \big|_{ ( y, \vec{z} ) = ( x, \vec{m} ( t,x ) ) } 
        + o ( {\varepsilon }^{2} ).
\end{align*}
Therefore, $\tilde{u}$ is a CSES, if
\begin{align}
0 & < \tilde{u}_{x} ( t,x ) \big\langle \big( 1, \vec{\lambda }^{ t,x, \vec{m} ( t,x ) } ( t,x ) \big), \vec{H}^{ t,x, \vec{m} ( t,x ) }_{xu} \big( t,x, \tilde{u} ( t,x ) \big) \big\rangle \notag \\
  & \quad + \frac{1}{2} | \tilde{u}_{x} ( t,x ) |^{2} \big\langle \big( 1, \vec{\lambda }^{ t,x, \vec{m} ( t,x ) } ( t,x ) \big), \vec{H}^{ t,x, \vec{m} ( t,x ) }_{uu} \big( t,x, \tilde{u} ( t,x ) \big) \big\rangle,
\label{eq: equilibrium condition second-order I :eq}
\end{align}
or
\begin{equation}
\left\{ \begin{aligned}
0 & \ne \tilde{u}_{x} ( t,x ), \\
0 & \le \tilde{u}_{x} ( t,x ) \big\langle \big( 1, \vec{\lambda }^{ t,x, \vec{m} ( t,x ) } ( t,x ) \big), 
                                          {\partial }_{x} \vec{H}^{ t,y, \vec{z} }_{u} \big( t,x, \tilde{u} ( t,x ) \big) \big\rangle \big|_{ ( y, \vec{z} ) = ( x, \vec{m} ( t,x ) ) }.
\end{aligned} \right.
\label{eq: equilibrium condition second-order II :eq}
\end{equation}
\end{theorem}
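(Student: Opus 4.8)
The plan is to begin from the second-order expansion \cref{eq: second-order perturbation :eq} of \Cref{lem: second-order perturbation}, specialized to the spike value $\zeta = \tilde{u} ( t,x )$, and then to collapse its $\varepsilon^{2}$-coefficient using the PDE \cref{eq: extended HJB equations :eq} together with the commutator identity for $\mathcal{D}_{\zeta } \mathcal{A}_{\xi }$ recorded at the end of \Cref{sec: Model and problem formulation}. First I would note that the linear term drops out automatically: evaluating the constant-control generator at $\zeta = \tilde{u} ( t,x )$ reproduces $\mathcal{D}_{ \tilde{u} ( t,x ) } \vec{F}^{ t,x, \vec{m} ( t,x ) } ( t,x )$, which vanishes by \cref{eq: extended HJB equations :eq}. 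Reading off the $\vec{m}$-block of the same equation gives $\mathcal{D}_{\zeta } \vec{m} ( t,x ) |_{ \zeta = \tilde{u} ( t,x ) } = 0$, so the two pieces of the $\varepsilon^{2}$-coefficient that carry the factor $\mathcal{D}_{\zeta } \vec{m}$ (the Hessian quadratic form in ${\mu }$ and the $\partial_{ \vec{z} } \mathcal{D}_{\zeta } U$ cross term) both disappear. What remains is $\tfrac{1}{2} \big\langle \big( 1, \vec{\lambda }^{ t,x, \vec{m} ( t,x ) } ( t,x ) \big), \mathcal{D}_{\zeta }^{2} \vec{F}^{ t,x, \vec{m} ( t,x ) } ( t,x ) \big\rangle$ at $\zeta = \tilde{u} ( t,x )$, so everything hinges on simplifying this single second-order generator term.

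The central step is to rewrite $\mathcal{D}_{\zeta }^{2} \vec{F} ( t,x ) |_{ \zeta = \tilde{u} ( t,x ) }$. I would view $\mathcal{D}_{\xi } \vec{F} ( t,x ) = \vec{F}_{t} ( t,x ) + \vec{H}^{ t,x, \vec{m} ( t,x ) } ( t,x, \xi )$ as a function of the free control $\xi $ and observe that \cref{eq: extended HJB equations :eq} reads $\mathcal{A}_{\xi } [ \mathcal{D}_{\xi } \vec{F} ] ( t,x ) \equiv 0$, i.e. the substitution $\xi \mapsto \tilde{u} ( t,x )$ annihilates $\mathcal{D}_{\xi } \vec{F}$. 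Applying $\mathcal{D}_{\zeta }$ to this identically-zero object and invoking the commutator expansion of $\mathcal{D}_{\zeta } \mathcal{A}_{\xi }$ yields, after setting $\zeta = \tilde{u} ( t,x )$ so that the inner and outer control values coincide,
\[
0 = \mathcal{D}_{ \tilde{u} ( t,x ) }^{2} \vec{F} + ( \mathcal{D}_{ \tilde{u} ( t,x ) } \tilde{u} )\, \vec{H}_{u} + 2 \tilde{u}_{x} \hat{\sigma }^{ \tilde{u} ( t,x ) }\, \vec{H}_{xu} + | \tilde{u}_{x} |^{2} \hat{\sigma }^{ \tilde{u} ( t,x ) }\, \vec{H}_{uu} ,
\]
where $\vec{H}_{u}, \vec{H}_{xu}, \vec{H}_{uu}$ are the $\partial_{\xi }$-derivatives of $\vec{H}$ (evaluated at $( t,x, \tilde{u} ( t,x ) )$ with index $\vec{m} ( t,x )$) and arise precisely because $\vec{F}_{t}$ carries no control dependence. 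Taking the inner product with $\big( 1, \vec{\lambda } \big)$ and discarding the $\vec{H}_{u}$-term through the first-order derivative condition \cref{eq: first-order derivative condition :eq} leaves $\langle ( 1, \vec{\lambda } ), \mathcal{D}_{ \tilde{u} ( t,x ) }^{2} \vec{F} \rangle = - 2 \tilde{u}_{x} \hat{\sigma } \langle ( 1, \vec{\lambda } ), \vec{H}_{xu} \rangle - | \tilde{u}_{x} |^{2} \hat{\sigma } \langle ( 1, \vec{\lambda } ), \vec{H}_{uu} \rangle$; halving and restoring the $\varepsilon^{2}$ factor is exactly the first displayed identity in the statement. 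The second form then follows from the elementary chain rule ${\partial }_{x} ( \vec{H}_{u} ( t,x, \tilde{u} ( t,x ) ) ) = \vec{H}_{xu} + \tilde{u}_{x} \vec{H}_{uu}$, applied with the index $( y, \vec{z} )$ frozen, which trades $- \tfrac{1}{2}$ for $+ \tfrac{1}{2}$ on the $\vec{H}_{uu}$-term.

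For the sufficiency I would read the sign of the leading coefficient. Since $\hat{\sigma }^{ \tilde{u} ( t,x ) } ( t,x ) = \tfrac{1}{2} | \sigma ( t,x, \tilde{u} ( t,x ) ) |^{2} \ge 0$, condition \cref{eq: equilibrium condition second-order I :eq} forces the bracket multiplying $- \hat{\sigma }$ in the first form to be strictly positive, so the $\varepsilon^{2}$-coefficient is strictly negative; alternatively, under \cref{eq: equilibrium condition second-order II :eq} with $\tilde{u}_{x} \ne 0$, the second-order derivative condition \cref{eq: second-order derivative condition :eq} gives $\langle ( 1, \vec{\lambda } ), \vec{H}_{uu} \rangle < 0$, making the $\vec{H}_{uu}$-term in the second form negative while the sign restriction on $\tilde{u}_{x} \langle ( 1, \vec{\lambda } ), {\partial }_{x} \vec{H}_{u} \rangle$ renders the remaining term nonpositive. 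In either case $J ( t,x, \tilde{u}^{ t, \varepsilon, \tilde{u} ( t,x ) } ) - J ( t,x, \tilde{u} ) < 0$ for all sufficiently small $\varepsilon $, which by \cref{eq: sufficient condition for CSEC :eq} delivers the CSES property.

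The main obstacle is the bookkeeping in the central step: one must scrupulously separate the constant-control operator $\mathcal{D}_{\zeta }$ from the feedback map $\tilde{u}$ embedded in \cref{eq: extended HJB equations :eq}, and verify that evaluating the commutator identity at $\zeta = \tilde{u} ( t,x )$ legitimately identifies $\mathcal{A}_{\xi } \mathcal{D}_{\zeta } \mathcal{D}_{\xi } \vec{F}$ with $\mathcal{D}_{ \tilde{u} ( t,x ) }^{2} \vec{F}$, since the partials defining both hold the control slot constant at the same value during differentiation. A secondary technicality is confirming that the $o ( \varepsilon^{2} )$ remainder in \Cref{lem: second-order perturbation} is uniform enough for a strictly negative leading term to dominate, which is supplied by the integrability hypotheses imported from \Cref{lem: higher-order expansion rule}. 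Finally, one should flag the degenerate subcase $\tilde{u}_{x} \equiv 0$ (as in \Cref{exp: linear control problem}), where the entire $\varepsilon^{2}$-coefficient vanishes so that neither \cref{eq: equilibrium condition second-order I :eq} nor \cref{eq: equilibrium condition second-order II :eq} can apply and a third-order expansion becomes necessary, precisely the situation treated in \Cref{thm: third-order derivative condition for CSES}.
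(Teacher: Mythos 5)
Your proposal is correct and follows essentially the same route as the paper's proof: reduce to the spike $\zeta = \tilde{u}(t,x)$ via \cref{eq: sufficient condition for CSEC :eq}, collapse the $\varepsilon^{2}$-coefficient of \cref{eq: second-order perturbation :eq} to $\tfrac{1}{2}\langle(1,\vec{\lambda}),\mathcal{A}_{\zeta}\mathcal{D}_{\zeta}^{2}\vec{F}\rangle$ using \cref{eq: extended HJB equations :eq}, and then simplify that term with the $\mathcal{D}_{\zeta}\mathcal{A}_{\xi}$ commutator identity and the first-order condition \cref{eq: first-order derivative condition :eq}. Your variant of differentiating the identically-zero function $\mathcal{A}_{\xi}\mathcal{D}_{\xi}\vec{F}\equiv 0$ and solving for $\mathcal{A}_{\zeta}\mathcal{D}_{\zeta}^{2}\vec{F}$ is just a rearrangement of the paper's step $\mathcal{D}_{\zeta}^{2}\vec{F}=\mathcal{D}_{\zeta}(\mathcal{D}_{\zeta}\vec{F}-\mathcal{A}_{\xi}\mathcal{D}_{\xi}\vec{F})$, yielding the identical expansion and sign analysis.
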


\begin{proof}
Clearly, the derivative optimality conditions \cref{eq: first-order derivative condition :eq} and \cref{eq: second-order derivative condition :eq} imply that 
$\tilde{u} ( t,x ) \in \liminf_{ \varepsilon \downarrow 0 } \mathcal{Z}^{ t, \tilde{u} }_{\varepsilon }$ is the unique maximum point for \cref{eq: equilibrium condition :eq}.
Consequently, to determine whether $\tilde{u}$ is a CSES, it suffices to show \cref{eq: sufficient condition for CSEC :eq}.
By plugging \cref{eq: extended HJB equations :eq} into \cref{eq: second-order perturbation :eq}, one can proceed with
\begin{equation}
J ( t,x, \tilde{u}^{ t, \varepsilon, \tilde{u} ( t,x ) } ) - J ( t,x, \tilde{u} )
= \frac{1}{2} {\varepsilon }^{2} \big\langle \big( 1, \vec{\lambda }^{ t,x, \vec{m} ( t,x ) } ( t,x ) \big), \mathcal{A}_{\zeta } \mathcal{D}_{\zeta }^{2} \vec{F}^{ t,x, \vec{m} ( t,x ) } ( t,x ) \big\rangle
+ o ( {\varepsilon }^{2} ).
\label{eq: second-order expansion for given perturbation :eq}
\end{equation}
Notably, the ${\varepsilon }^{2}$-term on the right-hand side of \cref{eq: second-order expansion for given perturbation :eq} can be further simplified 
by exploiting the PDE \cref{eq: extended HJB equations :eq} and the first-order derivative condition \cref{eq: first-order derivative condition :eq}.
Indeed, 
\begin{align*}
  \mathcal{D}_{\zeta }^{2} \vec{F}^{ s,y, \vec{z} }
& = \mathcal{D}_{\zeta } ( \mathcal{D}_{\zeta } \vec{F}^{ s,y, \vec{z} } - \mathcal{A}_{\xi } \mathcal{D}_{\xi } \vec{F}^{ s,y, \vec{z} } ) \\
& = \mathcal{D}_{\zeta } \mathcal{A}_{\xi } \big( \vec{H}^{ s,y, \vec{z} } ( t,x, \zeta ) - \vec{H}^{ s,y, \vec{z} } ( t,x, \xi ) \big) \\
& = \mathcal{A}_{\xi } \mathcal{D}_{\zeta } \big( \vec{H}^{ s,y, \vec{z} } ( t,x, \zeta ) - \vec{H}^{ s,y, \vec{z} } ( t,x, \xi ) \big) \\
& \quad - \big( ( \mathcal{D}_{\zeta } \tilde{u} ) \mathcal{A}_{\xi } {\partial }_{\xi } 
              + 2 \tilde{u}_{x} \hat{\sigma }^{\zeta } {\partial }_{x} \mathcal{A}_{\xi } {\partial }_{\xi } 
              - | \tilde{u}_{x} |^{2} \hat{\sigma }^{\zeta } \mathcal{A}_{\xi } {\partial }_{\xi }^{2} \big) \vec{H}^{ s,y, \vec{z} } ( t,x, \xi ),
\end{align*}
where the arguments $( t,x )$ suppressed for notational simplicity.
Consequently,
\begin{align*}
  \mathcal{A}_{\zeta } \mathcal{D}_{\zeta }^{2} \vec{F}^{ s,y, \vec{z} } ( t,x )
& = - \vec{H}^{ s,y, \vec{z} }_{u} \big( t,x, \tilde{u} ( t,x ) \big) \mathcal{D}_{\zeta } \tilde{u} ( t,x ) 
    - 2 \vec{H}^{ s,y, \vec{z} }_{xu} \big( t,x, \tilde{u} ( t,x ) \big) \tilde{u}_{x} ( t,x ) \hat{\sigma } \big( t,x, \tilde{u} ( t,x ) \big) \\
& \quad - \vec{H}^{ s,y, \vec{z} }_{uu} \big( t,x, \tilde{u} ( t,x ) \big) | \tilde{u}_{x} ( t,x ) |^{2} \hat{\sigma } \big( t,x, \tilde{u} ( t,x ) \big) \\
& = - \vec{H}^{ s,y, \vec{z} }_{u} \big( t,x, \tilde{u} ( t,x ) \big) \mathcal{D}_{\zeta } \tilde{u} ( t,x ) 
    - 2 \hat{\sigma } \big( t,x, \tilde{u} ( t,x ) \big) \tilde{u}_{x} ( t,x ) \frac{\partial }{\partial x} \vec{H}^{ s,y, \vec{z} }_{u} \big( t,x, \tilde{u} ( t,x ) \big) \\
& \quad + \hat{\sigma } \big( t,x, \tilde{u} ( t,x ) \big) | \tilde{u}_{x} ( t,x ) |^{2} \vec{H}^{ s,y, \vec{z} }_{uu} \big( t,x, \tilde{u} ( t,x ) \big).
\end{align*}
Substituting these expressions into \cref{eq: second-order expansion for given perturbation :eq} and employing the first-order derivative optimality condition \cref{eq: first-order derivative condition :eq}
yields the desired expansion for $J ( t,x, \tilde{u}^{ t, \varepsilon, \tilde{u} ( t,x ) } ) - J ( t,x, \tilde{u} )$.
The sufficient condition \cref{eq: equilibrium condition second-order I :eq} immediately follows.
Since the second-order derivative optimality condition \cref{eq: second-order derivative condition :eq} holds,
\cref{eq: equilibrium condition second-order II :eq} also leads to \cref{eq: sufficient condition for CSEC :eq}, and hence is sufficient for a CSES.
\end{proof}

According to \Cref{thm: second-order derivative condition for CSES}, in the case where $\tilde{u}_{x} ( t,x ) = 0$, e.g., for \Cref{exp: linear control problem}, one obtains
\begin{equation*}
J ( t,x, \tilde{u}^{ t, \varepsilon, \tilde{u} ( t,x ) } ) - J ( t,x, \tilde{u} ) = o ( {\varepsilon }^{2} ).
\end{equation*}
This expression does not prevent the occurrence of $J ( t,x, \tilde{u}^{ t, \varepsilon, \zeta } ) - J ( t,x, \tilde{u} ) = K ( t,x, \varepsilon ) {\varepsilon }^{3} > 0$ for some bounded positive function $K$.
Therefore, we need to investigate the third-order expansion and establish the third-order derivative equilibrium condition for this case.

\begin{theorem}\label{thm: third-order derivative condition for CSES}
Suppose that $\tilde{u} \in \mathcal{U} \cap {C}^{2,4}$ with $\tilde{u}_{x} ( t,x ) = 0$ at the given point $( t,x ) \in [ 0,T ) \times \mathbb{R}$.
Assume that $\vec{F}^{ s,y, \vec{z} } = ( {U}^{ s,y, \vec{z} }, \vec{m} ) \in {C}^{3,6}$ 
satisfies the conditions in \Cref{thm: sufficient condition for CNEC,thm: second-order derivative condition for CSES} and \Cref{lem: higher-order expansion rule} with $k=3$,
and that $\mathcal{D}_{\zeta }^{k} {U}^{ s,y, \cdot } ( t,x ) \in {C}^{3-k}$ for $k = 0,1,2,3$.
Then,
\begin{align*}
& J ( t,x, \tilde{u}^{ t, \varepsilon, \tilde{u} ( t,x ) } ) - J ( t,x, \tilde{u} ) \\ 
& = \frac{1}{6} {\varepsilon }^{3} \big( | \mathcal{D}_{ \tilde{u} ( t,x ) } \tilde{u} ( t,x ) |^{2} + 2 | \tilde{u}_{xx} ( t,x ) \hat{\sigma }^{ \tilde{u} ( t,x ) } ( t,x ) |^{2} \big) 
    \big\langle \big( 1, \vec{\lambda }^{ t,x, \vec{m} ( t,x ) } ( t,x ) \big), \vec{H}^{ t,x, \vec{m} ( t,x ) }_{uu} \big( t,x, \tilde{u} ( t,x ) \big) \big\rangle \\
& \quad - \frac{1}{3} {\varepsilon }^{3} ( \mathcal{D}_{ \tilde{u} ( t,x ) } \tilde{u} ( t,x ) ) 
          \big\langle \big( 1, \vec{\lambda }^{ t,x, \vec{m} ( t,x ) } ( t,x ) \big), \mathcal{D}_{ \tilde{u} ( t,x ) } \vec{H}^{ t,x, \vec{m} ( t,x ) }_{u} \big( t,x, \tilde{u} ( t,x ) \big) \big\rangle \\
& \quad - \frac{1}{3} {\varepsilon }^{3} \big( 2 \mathcal{D}_{ \tilde{u} ( t,x ) } \tilde{u}_{x} ( t,x ) + 3 \tilde{u}_{xx} ( t,x ) \hat{\sigma }^{ \tilde{u} ( t,x ) }_{x} ( t,x ) \big) \hat{\sigma }^{ \tilde{u} ( t,x ) } ( t,x ) \\
& \qquad  \times \big\langle \big( 1, \vec{\lambda }^{ t,x, \vec{m} ( t,x ) } ( t,x ) \big), {\partial }_{x} \vec{H}^{ t,y, \vec{z} }_{u} \big( t,x, \tilde{u} ( t,x ) \big) \big\rangle \big|_{ ( y, \vec{z} ) = ( x, \vec{m} ( t,x ) ) } \\  
& \quad - \frac{2}{3} {\varepsilon }^{3} \tilde{u}_{xx} ( t,x ) | \hat{\sigma }^{ \tilde{u} ( t,x ) } ( t,x ) |^{2} 
          \big\langle \big( 1, \vec{\lambda }^{ t,x, \vec{m} ( t,x ) } ( t,x ) \big), {\partial }_{x}^{2} \vec{H}^{ t,y, \vec{z} }_{u} \big( t,x, \tilde{u} ( t,x ) \big) \big\rangle \big|_{ ( y, \vec{z} ) = ( x, \vec{m} ( t,x ) ) }
   + o ( {\varepsilon }^{3} ).
\end{align*}
Therefore, $\tilde{u}$ is a CSES, if
\begin{align}
0 & > \frac{1}{2} \big( | \mathcal{D}_{ \tilde{u} ( t,x ) } \tilde{u} ( t,x ) |^{2} + 2 | \tilde{u}_{xx} ( t,x ) \hat{\sigma }^{ \tilde{u} ( t,x ) } ( t,x ) |^{2} \big) 
      \big\langle \big( 1, \vec{\lambda }^{ t,x, \vec{m} ( t,x ) } ( t,x ) \big), \vec{H}^{ t,x, \vec{m} ( t,x ) }_{uu} \big( t,x, \tilde{u} ( t,x ) \big) \big\rangle \notag \\
  & \quad - ( \mathcal{D}_{ \tilde{u} ( t,x ) } \tilde{u} ( t,x ) ) 
            \big\langle \big( 1, \vec{\lambda }^{ t,x, \vec{m} ( t,x ) } ( t,x ) \big), \mathcal{D}_{ \tilde{u} ( t,x ) } \vec{H}^{ t,x, \vec{m} ( t,x ) }_{u} \big( t,x, \tilde{u} ( t,x ) \big) \big\rangle \notag \\
  & \quad - \big( 2 \mathcal{D}_{ \tilde{u} ( t,x ) } \tilde{u}_{x} ( t,x ) + 3 \tilde{u}_{xx} ( t,x ) \hat{\sigma }^{ \tilde{u} ( t,x ) }_{x} ( t,x ) \big) \hat{\sigma }^{ \tilde{u} ( t,x ) } ( t,x ) \notag \\
  & \qquad  \times \big\langle \big( 1, \vec{\lambda }^{ t,x, \vec{m} ( t,x ) } ( t,x ) \big), {\partial }_{x} \vec{H}^{ t,y, \vec{z} }_{u} \big( t,x, \tilde{u} ( t,x ) \big) \big\rangle \big|_{ ( y, \vec{z} ) = ( x, \vec{m} ( t,x ) ) } \notag \\  
  & \quad - 2 \tilde{u}_{xx} ( t,x ) | \hat{\sigma }^{ \tilde{u} ( t,x ) } ( t,x ) |^{2} 
              \big\langle \big( 1, \vec{\lambda }^{ t,x, \vec{m} ( t,x ) } ( t,x ) \big), {\partial }_{x}^{2} \vec{H}^{ t,y, \vec{z} }_{u} \big( t,x, \tilde{u} ( t,x ) \big) \big\rangle \big|_{ ( y, \vec{z} ) = ( x, \vec{m} ( t,x ) ) }.
\label{eq: equilibrium condition third-order I :eq}
\end{align}
or
\begin{equation}
\left\{ \begin{aligned}
0 & \ne | \mathcal{D}_{ \tilde{u} ( t,x ) } \tilde{u} ( t,x ) |^{2} + 2 | \tilde{u}_{xx} ( t,x ) \hat{\sigma }^{ \tilde{u} ( t,x ) } ( t,x ) |^{2}, \\
0 & \le ( \mathcal{D}_{ \tilde{u} ( t,x ) } \tilde{u} ( t,x ) ) 
        \big\langle \big( 1, \vec{\lambda }^{ t,x, \vec{m} ( t,x ) } ( t,x ) \big), \mathcal{D}_{ \tilde{u} ( t,x ) } \vec{H}^{ t,x, \vec{m} ( t,x ) }_{u} \big( t,x, \tilde{u} ( t,x ) \big) \big\rangle \\
  & \quad + \big( 2 \mathcal{D}_{ \tilde{u} ( t,x ) } \tilde{u}_{x} ( t,x ) + 3 \tilde{u}_{xx} ( t,x ) \hat{\sigma }^{ \tilde{u} ( t,x ) }_{x} ( t,x ) \big) \hat{\sigma }^{ \tilde{u} ( t,x ) } ( t,x ) \\
  & \qquad  \times \big\langle \big( 1, \vec{\lambda }^{ t,x, \vec{m} ( t,x ) } ( t,x ) \big), {\partial }_{x} \vec{H}^{ t,y, \vec{z} }_{u} \big( t,x, \tilde{u} ( t,x ) \big) \big\rangle \big|_{ ( y, \vec{z} ) = ( x, \vec{m} ( t,x ) ) } \\  
  & \quad + 2 \tilde{u}_{xx} ( t,x ) | \hat{\sigma }^{ \tilde{u} ( t,x ) } ( t,x ) |^{2} 
              \big\langle \big( 1, \vec{\lambda }^{ t,x, \vec{m} ( t,x ) } ( t,x ) \big), {\partial }_{x}^{2} \vec{H}^{ t,y, \vec{z} }_{u} \big( t,x, \tilde{u} ( t,x ) \big) \Big\rangle \big|_{ ( y, \vec{z} ) = ( x, \vec{m} ( t,x ) ) }.
\end{aligned} \right.
\label{eq: equilibrium condition third-order II :eq}
\end{equation}
\end{theorem}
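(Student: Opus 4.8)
The plan is to run the argument of \Cref{lem: second-order perturbation} one order higher and then collapse the outcome using the PDE \cref{eq: extended HJB equations :eq}. First I would apply \Cref{lem: higher-order expansion rule} with $k=3$ to $\vec{F}^{ t,x, \vec{z} }$ for a frozen parameter $\vec{z}$, and in parallel expand the genuine frozen moment
\[ \vec{z}^{*} := \mathbb{E}_{t} \big[ \vec{m} ( t + \varepsilon, {X}^{ t,x, \zeta }_{ t + \varepsilon } ) \big] = \vec{m} ( t,x ) + \sum_{ j=1 }^{3} \tfrac{1}{j!} \mathcal{D}_{\zeta}^{j} \vec{m} ( t,x ) {\varepsilon}^{j} + o ( {\varepsilon}^{3} ). \]
Writing $\Delta \vec{z} := \vec{z}^{*} - \vec{m} ( t,x )$ and Taylor-expanding each $\mathcal{D}_{\zeta}^{j} {U}^{ t,x, \vec{z} } ( t,x )$ in $\vec{z}$ about $\vec{m} ( t,x )$ through the gradient $\vec{\lambda}$, the Hessian $\mu$, and the third $\vec{z}$-derivative, I would collect the coefficient of ${\varepsilon}^{3}$. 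Exactly as for $k=2$, the orders ${\varepsilon}^{0}, {\varepsilon}^{1}, {\varepsilon}^{2}$ reproduce \cref{eq: first-order perturbation :eq} and \cref{eq: second-order perturbation :eq}, so the only new object is the ${\varepsilon}^{3}$-coefficient, which schematically equals $\tfrac{1}{6} \langle ( 1, \vec{\lambda} ), \mathcal{D}_{\zeta}^{3} \vec{F} \rangle$ plus cross terms, every one of which carries a factor $\mathcal{D}_{\zeta} \vec{m}$ except for the single term $\tfrac{1}{2} \langle \partial_{\vec{z}} \mathcal{D}_{\zeta} {U}, \mathcal{D}_{\zeta}^{2} \vec{m} \rangle$.

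The decisive simplification is to substitute the spike value $\zeta = \tilde{u} ( t,x )$ at the outset, as advertised in the introduction. The $\vec{m}$-block of \cref{eq: extended HJB equations :eq} gives $\mathcal{D}_{ \tilde{u} ( t,x ) } \vec{m} ( t,x ) = 0$, so every cross term carrying a factor $\mathcal{D}_{\zeta} \vec{m}$ drops out upon setting $\zeta = \tilde{u} ( t,x )$; this removes the $\mu$-term and all higher $\vec{z}$-products, exactly as in the $k=2$ case. The one remaining cross term $\tfrac{1}{2} \langle \partial_{\vec{z}} \mathcal{D}_{ \tilde{u} ( t,x ) } {U}^{ t,x, \vec{z} } ( t,x ) |_{ \vec{z} = \vec{m} ( t,x ) }, \mathcal{D}_{ \tilde{u} ( t,x ) }^{2} \vec{m} ( t,x ) \rangle$ also vanishes: since \cref{eq: extended HJB equations :eq} holds for \emph{every} frozen $\vec{z}$, one has $\mathcal{D}_{ \tilde{u} ( t,x ) } {U}^{ t,x, \vec{z} } ( t,x ) \equiv 0$ as a function of $\vec{z}$, and differentiating this identity in $\vec{z}$ yields $\partial_{\vec{z}} \mathcal{D}_{ \tilde{u} ( t,x ) } {U} = 0$ (this is precisely why the interchange discussed in \Cref{rem: Lipschitz continuity in z} is unnecessary). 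Hence the ${\varepsilon}^{3}$-coefficient collapses to $\tfrac{1}{6} \langle ( 1, \vec{\lambda}^{ t,x, \vec{m} ( t,x ) } ( t,x ) ), \mathcal{A}_{\zeta} \mathcal{D}_{\zeta}^{3} \vec{F}^{ t,x, \vec{m} ( t,x ) } ( t,x ) \rangle$.

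It remains to compute $\mathcal{A}_{\zeta} \mathcal{D}_{\zeta}^{3} \vec{F}$, and this is the main obstacle. Building on the identity $\mathcal{D}_{\zeta}^{2} \vec{F} = \mathcal{D}_{\zeta} \mathcal{A}_{\xi} \big( \vec{H}^{ s,y, \vec{z} } ( t,x, \zeta ) - \vec{H}^{ s,y, \vec{z} } ( t,x, \xi ) \big)$ established in the proof of \Cref{thm: second-order derivative condition for CSES}, I would write $\mathcal{D}_{\zeta}^{3} \vec{F} = \mathcal{D}_{\zeta}^{2} \mathcal{A}_{\xi} \big( \vec{H}^{ s,y, \vec{z} } ( t,x, \zeta ) - \vec{H}^{ s,y, \vec{z} } ( t,x, \xi ) \big)$ and push the two copies of $\mathcal{D}_{\zeta}$ through $\mathcal{A}_{\xi}$ using the chain-rule commutator $\mathcal{D}_{\zeta} \mathcal{A}_{\xi} = \mathcal{A}_{\xi} \mathcal{D}_{\zeta} + ( \mathcal{D}_{\zeta} \tilde{u} ) \mathcal{A}_{\xi} \partial_{\xi} + 2 \tilde{u}_{x} \hat{\sigma}^{\zeta} \partial_{x} \mathcal{A}_{\xi} \partial_{\xi} - | \tilde{u}_{x} |^{2} \hat{\sigma}^{\zeta} \mathcal{A}_{\xi} \partial_{\xi}^{2}$ recorded at the end of \Cref{sec: Model and problem formulation}. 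Setting $\zeta = \tilde{u} ( t,x )$ annihilates every undifferentiated factor $\vec{H} ( \cdot, \zeta ) - \vec{H} ( \cdot, \xi )$, and the hypothesis $\tilde{u}_{x} ( t,x ) = 0$ kills the bare $\tilde{u}_{x}$-factors; the delicate bookkeeping is to track exactly which factors of $\tilde{u}_{x}$ are first struck by a $\partial_{x}$ (producing $\tilde{u}_{xx}$ and $\mathcal{D}_{ \tilde{u} } \tilde{u}_{x}$) and which copies of $\vec{H}$ are struck by $\partial_{x}$ (producing $\partial_{x} \vec{H}_{u}$ and $\partial_{x}^{2} \vec{H}_{u}$) \emph{before} the substitution, since only those terms survive. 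Grouping them and invoking the first-order optimality condition \cref{eq: first-order derivative condition :eq} to discard the terms proportional to $\langle ( 1, \vec{\lambda} ), \vec{H}_{u} \rangle$ then yields the four-term expansion in the statement.

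Finally, the sufficient conditions follow from the sign of the ${\varepsilon}^{3}$-coefficient, after recalling that \Cref{thm: second-order derivative condition for CSES} has already reduced the CSES property to verifying \cref{eq: sufficient condition for CSEC :eq}. Condition \cref{eq: equilibrium condition third-order I :eq} is exactly the assertion that this coefficient is strictly negative (the fixed positive multiple relating the two being irrelevant to the sign), whence $J ( t,x, \tilde{u}^{ t, \varepsilon, \tilde{u} ( t,x ) } ) < J ( t,x, \tilde{u} )$ for all small $\varepsilon$. Condition \cref{eq: equilibrium condition third-order II :eq} covers the degenerate alternative: when $| \mathcal{D}_{ \tilde{u} ( t,x ) } \tilde{u} ( t,x ) |^{2} + 2 | \tilde{u}_{xx} ( t,x ) \hat{\sigma}^{ \tilde{u} ( t,x ) } ( t,x ) |^{2} \ne 0$, the second-order derivative condition \cref{eq: second-order derivative condition :eq} forces $\langle ( 1, \vec{\lambda} ), \vec{H}_{uu} \rangle < 0$ and hence makes the leading term strictly negative, while the stated inequality makes the remaining terms non-positive; in either case \cref{eq: sufficient condition for CSEC :eq} holds, so $\tilde{u}$ is a CSES.
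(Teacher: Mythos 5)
Your proposal is correct and follows essentially the same route as the paper's proof: the third-order application of \Cref{lem: higher-order expansion rule}, the collapse of every cross term at $\zeta = \tilde{u} ( t,x )$ via \cref{eq: extended HJB equations :eq} (your observation that differentiating the identically-zero map $\vec{z} \mapsto \mathcal{A}_{\zeta } \mathcal{D}_{\zeta } {U}^{ t,x, \vec{z} } ( t,x )$ in $\vec{z}$ kills the $\mathcal{D}_{\zeta }^{2} \vec{m}$ cross term with no interchange of derivatives is exactly the paper's first bullet, stated more transparently, and your appeal to \Cref{thm: second-order derivative condition for CSES} with $\tilde{u}_{x} ( t,x ) = 0$ disposes of the ${\varepsilon }^{2}$ order as in the paper's second bullet), the identity $\mathcal{D}_{\zeta }^{3} \vec{F}^{ s,y, \vec{z} } = \mathcal{D}_{\zeta }^{2} \mathcal{A}_{\xi } \big( \vec{H}^{ s,y, \vec{z} } ( t,x, \zeta ) - \vec{H}^{ s,y, \vec{z} } ( t,x, \xi ) \big)$ treated with the commutator recorded at the end of \Cref{sec: Model and problem formulation}, and the same use of \cref{eq: first-order derivative condition :eq} and \cref{eq: second-order derivative condition :eq} to obtain the sufficiency of \cref{eq: equilibrium condition third-order I :eq} and \cref{eq: equilibrium condition third-order II :eq}. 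The only shortfall relative to the paper is that you leave the commutator bookkeeping as a sketch, whereas the paper carries it out explicitly (including the auxiliary expansion of $\mathcal{D}_{\zeta }^{2} ( \tilde{u} f )$ for $f = \mathcal{A}_{\xi } \vec{H}^{ s,y, \vec{z} }_{u} ( t,x, \xi )$), which is precisely where the coefficients $\tfrac{1}{6}, \tfrac{1}{3}, \tfrac{2}{3}$ and the combinations $| \mathcal{D}_{ \tilde{u} ( t,x ) } \tilde{u} ( t,x ) |^{2} + 2 | \tilde{u}_{xx} ( t,x ) \hat{\sigma }^{ \tilde{u} ( t,x ) } ( t,x ) |^{2}$ and $2 \mathcal{D}_{ \tilde{u} ( t,x ) } \tilde{u}_{x} ( t,x ) + 3 \tilde{u}_{xx} ( t,x ) \hat{\sigma }^{ \tilde{u} ( t,x ) }_{x} ( t,x )$ in the stated expansion come from.
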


\begin{proof}
Similar to the proof for \Cref{lem: second-order perturbation}, let us proceed with
\begin{align*}
& J ( t,x, \tilde{u}^{ t, \varepsilon, \tilde{u} ( t,x ) } ) - J ( t,x, \tilde{u} ) \\
& = \mathcal{A}_{\zeta } \Big( \mathbb{E}_{t} [ {U}^{ t,x, \mathbb{E}_{t} [ \vec{m} ( t + \varepsilon, {X}^{ t,x, \zeta }_{ t + \varepsilon } ) ] } ( t + \varepsilon, {X}^{ t,x, \zeta }_{ t + \varepsilon } ) ]
                             - {U}^{ t,x, \mathbb{E}_{t} [ \vec{m} ( t + \varepsilon, {X}^{ t,x, \zeta }_{ t + \varepsilon } ) ] } ( t,x ) \Big) \\
& \quad + \mathcal{A}_{\zeta } \Big( {U}^{ t,x, \vec{m} ( t,x ) + \mathcal{D}_{\zeta } \vec{m} ( t,x ) \varepsilon + \frac{1}{2} \mathcal{D}_{\zeta }^{2} \vec{m} ( t,x ) {\varepsilon }^{2} 
                                                                + \frac{1}{6} \mathcal{D}_{\zeta }^{3} \vec{m} ( t,x ) {\varepsilon }^{3} } ( t,x )
                                   - {U}^{ t,x, \vec{m} ( t,x ) } ( t,x ) \Big)
        + o ( {\varepsilon }^{3} )  \\
& = \mathcal{A}_{\zeta } \mathcal{D}_{\zeta } {U}^{ t,x, \mathbb{E}_{t} [ \vec{m} ( t + \varepsilon, {X}^{ t,x, \zeta }_{ t + \varepsilon } ) ] } ( t,x ) \varepsilon
  + \frac{1}{2} \mathcal{A}_{\zeta } \mathcal{D}_{\zeta }^{2} {U}^{ t,x, \vec{m} ( t,x ) } ( t,x ) {\varepsilon }^{2} 
  + \frac{1}{6} \mathcal{A}_{\zeta } \mathcal{D}_{\zeta }^{3} {U}^{ t,x, \vec{m} ( t,x ) } ( t,x ) {\varepsilon }^{3} \\
& \quad + \Big\langle \vec{\lambda }^{ t,x, \vec{m} ( t,x ) } ( t,x ), \frac{1}{2} \mathcal{A}_{\zeta } \mathcal{D}_{\zeta }^{2} \vec{m} ( t,x ) {\varepsilon }^{2} 
                                                                     + \frac{1}{6} \mathcal{A}_{\zeta } \mathcal{D}_{\zeta }^{3} \vec{m} ( t,x ) {\varepsilon }^{3} \Big\rangle 
        + o ( {\varepsilon }^{3} ) \\
& = \frac{1}{6} \big\langle \big( 1, \vec{\lambda }^{ t,x, \vec{m} ( t,x ) } ( t,x ) \big), \mathcal{A}_{\zeta } \mathcal{D}_{\zeta }^{3} \vec{F}^{ t,x, \vec{m} ( t,x ) } ( t,x ) \big\rangle {\varepsilon }^{3}
  + o ( {\varepsilon }^{3} ),
\end{align*}
where the last equality holds because
\begin{itemize}
\item $\mathcal{A}_{\zeta } \mathcal{D}_{\zeta } \vec{\lambda }^{ t,x, \vec{m} ( t,x ) } ( t,x ) = 0$ according to \cref{eq: extended HJB equations :eq} for all $\vec{z} \in \mathbb{R}^{n}$,
\item and $\langle ( 1, \vec{\lambda }^{ t,x, \vec{m} ( t,x ) } ( t,x ) ), \mathcal{A}_{\zeta } \mathcal{D}_{\zeta }^{2} \vec{F}^{ t,x, \vec{m} ( t,x ) } ( t,x ) \rangle = 0$
      according to \Cref{thm: second-order derivative condition for CSES} with $\tilde{u}_{x} ( t,x ) = 0$.
\end{itemize}
Similar to the proof for \Cref{thm: second-order derivative condition for CSES}, given that $\mathcal{A}_{\xi } \mathcal{D}_{\xi } \vec{F}^{ s,y, \vec{z} } \equiv 0$ and $\tilde{u}_{x} = 0$ at $( t,x )$, we have
\begin{align*}
  \mathcal{D}_{\zeta }^{3} \vec{F}^{ s,y, \vec{z} }
& = \mathcal{D}_{\zeta }^{2} ( \mathcal{D}_{\zeta } \vec{F}^{ s,y, \vec{z} } - \mathcal{A}_{\xi } \mathcal{D}_{\xi } \vec{F}^{ s,y, \vec{z} } ) \\
& = \mathcal{D}_{\zeta }^{2} \mathcal{A}_{\xi } \big( \vec{H}^{ s,y, \vec{z} } ( t,x, \zeta ) - \vec{H}^{ s,y, \vec{z} } ( t,x, \xi ) \big) \\
& = \mathcal{D}_{\zeta } \big( \mathcal{A}_{\xi } \mathcal{D}_{\zeta } 
                             + \mathcal{D}_{\zeta } ( \tilde{u} \mathcal{A}_{\xi } {\partial }_{\xi } )
                             - \tilde{u} \mathcal{D}_{\zeta } \mathcal{A}_{\xi } {\partial }_{\xi }
                             - | \tilde{u}_{x} |^{2} \hat{\sigma }^{\zeta } \mathcal{A}_{\xi } {\partial }_{\xi }^{2} \big)
    \big( \vec{H}^{ s,y, \vec{z} } ( t,x, \zeta ) - \vec{H}^{ s,y, \vec{z} } ( t,x, \xi ) \big) \\
& = \mathcal{A}_{\xi } \mathcal{D}_{\zeta }^{2} \big( \vec{H}^{ s,y, \vec{z} } ( t,x, \zeta ) - \vec{H}^{ s,y, \vec{z} } ( t,x, \xi ) \big)
  - ( \mathcal{D}_{\zeta } \tilde{u} ) \mathcal{A}_{\xi } \mathcal{D}_{\zeta } \vec{H}^{ s,y, \vec{z} }_{u} ( t,x, \xi ) \\
& \quad - \mathcal{D}_{\zeta }^{2} \big( \tilde{u} \mathcal{A}_{\xi } \vec{H}^{ s,y, \vec{z} }_{u} ( t,x, \xi ) \big)
        + \mathcal{D}_{\zeta } \big( \tilde{u} \mathcal{D}_{\zeta } \mathcal{A}_{\xi } \vec{H}^{ s,y, \vec{z} }_{u} ( t,x, \xi ) \big)
        + \mathcal{D}_{\zeta } \big( | \tilde{u}_{x} |^{2} \hat{\sigma }^{\zeta } \mathcal{A}_{\xi } \vec{H}^{ s,y, \vec{z} }_{uu} ( t,x, \xi ) \big) \\
& = \mathcal{A}_{\xi } \mathcal{D}_{\zeta }^{2} \big( \vec{H}^{ s,y, \vec{z} } ( t,x, \zeta ) - \vec{H}^{ s,y, \vec{z} } ( t,x, \xi ) \big)
  + ( \mathcal{D}_{\zeta } \tilde{u} ) ( \mathcal{D}_{\zeta } \mathcal{A}_{\xi } - \mathcal{A}_{\xi } \mathcal{D}_{\zeta } ) \vec{H}^{ s,y, \vec{z} }_{u} ( t,x, \xi ) \\
& \quad - \mathcal{D}_{\zeta }^{2} \big( \tilde{u} \mathcal{A}_{\xi } \vec{H}^{ s,y, \vec{z} }_{u} ( t,x, \xi ) \big)
        + \tilde{u} \mathcal{D}_{\zeta }^{2} \mathcal{A}_{\xi } \vec{H}^{ s,y, \vec{z} }_{u} ( t,x, \xi ) 
        + ( \mathcal{D}_{\zeta } | \tilde{u}_{x} |^{2} ) \hat{\sigma }^{\zeta } \mathcal{A}_{\xi } \vec{H}^{ s,y, \vec{z} }_{uu} ( t,x, \xi ) \\
& = \mathcal{A}_{\xi } \mathcal{D}_{\zeta }^{2} \big( \vec{H}^{ s,y, \vec{z} } ( t,x, \zeta ) - \vec{H}^{ s,y, \vec{z} } ( t,x, \xi ) \big)
  + ( | \mathcal{D}_{\zeta } \tilde{u} |^{2} + 2 | \tilde{u}_{xx} |^{2} | \hat{\sigma }^{\zeta } |^{2} ) \mathcal{A}_{\xi } \vec{H}^{ s,y, \vec{z} }_{uu} ( t,x, \xi ) \\
& \quad - \mathcal{D}_{\zeta }^{2} \big( \tilde{u} \mathcal{A}_{\xi } \vec{H}^{ s,y, \vec{z} }_{u} ( t,x, \xi ) \big)
        + \tilde{u} \mathcal{D}_{\zeta }^{2} \mathcal{A}_{\xi } \vec{H}^{ s,y, \vec{z} }_{u} ( t,x, \xi ).
\end{align*}
For $f = \mathcal{A}_{\xi } \vec{H}^{ s,y, \vec{z} }_{u} ( t,x, \xi )$, we have
\begin{align*}
  \mathcal{D}_{\zeta }^{2} ( \tilde{u} f )
& = \mathcal{D}_{\zeta } ( \tilde{u} \mathcal{D}_{\zeta } f + f \mathcal{D}_{\zeta } u + \tilde{u}_{x} \hat{\sigma }^{\zeta } {\partial }_{x} f ) \\
& = \tilde{u} ( \mathcal{D}_{\zeta }^{2} f ) + 2 ( \mathcal{D}_{\zeta } \tilde{u} ) ( \mathcal{D}_{\zeta } f ) + ( \mathcal{D}_{\zeta }^{2} \tilde{u} ) f
+ ( 4 \mathcal{D}_{\zeta } {\partial }_{x} \tilde{u} + 6 \tilde{u}_{xx} \hat{\sigma }_{x} ) \hat{\sigma } {\partial }_{x} f
+ 4 \tilde{u}_{xx} | \hat{\sigma } |^{2} {\partial }_{x}^{2} f.
\end{align*}
Consequently,
\begin{align*}
  \mathcal{D}_{\zeta }^{3} \vec{F}^{ s,y, \vec{z} }
& = \mathcal{A}_{\xi } \mathcal{D}_{\zeta }^{2} \big( \vec{H}^{ s,y, \vec{z} } ( t,x, \zeta ) - \vec{H}^{ s,y, \vec{z} } ( t,x, \xi ) \big)
  + ( | \mathcal{D}_{\zeta } \tilde{u} |^{2} + 2 | \tilde{u}_{xx} |^{2} | \hat{\sigma }^{\zeta } |^{2} ) \mathcal{A}_{\xi } \vec{H}^{ s,y, \vec{z} }_{uu} ( t,x, \xi ) \\
& \quad - \big( ( \mathcal{D}_{\zeta }^{2} \tilde{u} ) + 2 ( \mathcal{D}_{\zeta } \tilde{u} ) \mathcal{D}_{\zeta }
              + ( 4 \mathcal{D}_{\zeta } \tilde{u}_{x} + 6 \tilde{u}_{xx} \hat{\sigma }^{\zeta }_{x} ) \hat{\sigma }^{\zeta } {\partial }_{x}
              + 4 \tilde{u}_{xx} | \hat{\sigma }^{\zeta } |^{2} {\partial }_{x}^{2} \big) \mathcal{A}_{\xi } \vec{H}^{ s,y, \vec{z} }_{u} ( t,x, \xi ).
\end{align*}
Since $\mathcal{A}_{\zeta } \mathcal{A}_{\xi } \mathcal{D}_{\zeta }^{2} ( \vec{H}^{ s,y, \vec{z} } ( t,x, \zeta ) - \vec{H}^{ s,y, \vec{z} } ( t,x, \xi ) ) = 0$,
and the first-order derivative condition \cref{eq: first-order derivative condition :eq} gives
\begin{equation*}
\big\langle \big( 1, \vec{\lambda }^{ t,x, \vec{m} ( t,x ) } ( t,x ) \big), \mathcal{A}_{\zeta } ( \mathcal{D}_{\zeta }^{2} \tilde{u} ) \mathcal{A}_{\xi } \vec{H}^{ t,x, \vec{m} ( t,x ) }_{u} ( t,x, \xi ) \big\rangle = 0,
\end{equation*}
one can obtain the desired third-order expansion for $J ( t,x, \tilde{u}^{ t, \varepsilon, \tilde{u} ( t,x ) } ) - J ( t,x, \tilde{u} )$ and the sufficiency of \cref{eq: equilibrium condition second-order I :eq} for a CESE.
Furthermore, given the second-order derivative condition \cref{eq: second-order derivative condition :eq},
the sufficiency of \cref{eq: equilibrium condition second-order II :eq} for a CSES immediately arises.
\end{proof}

\begin{remark}\label{rem: state-independent case}
In particular, if $\tilde{u}_{x} \equiv 0$ on the neighborhood of $( t,x )$, e.g., in \Cref{exp: linear control problem}, then \cref{eq: equilibrium condition third-order I :eq} reduces to
\begin{align*}
0 & < 2 \tilde{u}_{t} ( t,x ) \big\langle \big( 1, \vec{\lambda }^{ t,x, \vec{m} ( t,x ) } ( t,x ) \big), \mathcal{D}_{ \tilde{u} ( t,x ) } \vec{H}^{ t,x, \vec{m} ( t,x ) }_{u} \big( t,x, \tilde{u} ( t,x ) \big) \big\rangle \\
  & \quad - | \tilde{u}_{t} ( t,x ) |^{2} \big\langle \big( 1, \vec{\lambda }^{ t,x, \vec{m} ( t,x ) } ( t,x ) \big), \vec{H}^{ t,x, \vec{m} ( t,x ) }_{uu} \big( t,x, \tilde{u} ( t,x ) \big) \big\rangle.
\end{align*}
while \cref{eq: equilibrium condition third-order II :eq} reduces to
\begin{equation*}
\left\{ \begin{aligned}
0 & \ne | \tilde{u}_{t} ( t,x ) |^{2}, \\
0 & \le \tilde{u}_{t} ( t,x ) \big\langle \big( 1, \vec{\lambda }^{ t,x, \vec{m} ( t,x ) } ( t,x ) \big), \mathcal{D}_{ \tilde{u} ( t,x ) } \vec{H}^{ t,x, \vec{m} ( t,x ) }_{u} \big( t,x, \tilde{u} ( t,x ) \big) \big\rangle.
\end{aligned} \right.
\end{equation*}
\end{remark}

\begin{remark}[necessary conditions for CSES]\label{rem: necessary conditions for CSES}
If $\tilde{u} \in \mathcal{U} \cap {C}^{1,2}$ is a CSES and satisfies the conditions in \Cref{thm: second-order derivative condition for CSES},
then the second-order expansion of $J ( t,x, \tilde{u}^{ t, \varepsilon, \tilde{u} ( t,x ) } ) - J ( t,x, \tilde{u} )$ implies that
the right-hand side of \cref{eq: equilibrium condition second-order I :eq} must be non-negative.
Furthermore, if $\tilde{u} \in \mathcal{U} \cap {C}^{2,4}$ is a CSES and satisfies the conditions in \Cref{thm: third-order derivative condition for CSES},
and the right-hand side of \cref{eq: equilibrium condition second-order I :eq} vanishes at some $( t,x )$,
then the third-order expansion of $J ( t,x, \tilde{u}^{ t, \varepsilon, \tilde{u} ( t,x ) } ) - J ( t,x, \tilde{u} )$ implies that
the right-hand side of \cref{eq: equilibrium condition third-order I :eq} must be non-negative.
\end{remark}

To end this section, we should note that the derived equilibrium conditions up to the third order are not necessarily sufficient for verifying a CSES in general.
For example, for $\tilde{u}_{x} \equiv 0$ on the neighborhood of $( t,x )$ with $\tilde{u}_{t} ( t,x ) = 0$, one obtains $J ( t,x, \tilde{u}^{ t, \varepsilon, \tilde{u} ( t,x ) } ) - J ( t,x, \tilde{u} ) = o ( {\varepsilon }^{3} )$,
implying that the fourth-order derivative equilibrium condition requires investigation.
Interested readers can refer to the proofs of \Cref{lem: higher-order expansion rule} and \Cref{thm: second-order derivative condition for CSES,thm: third-order derivative condition for CSES}
to establish the derivative equilibrium conditions of higher orders.
For the control problem formulated by \cref{eq: linear control problem :eq} with the conditions described in \Cref{exp: linear control problem},
we can find a sufficient condition for the given CNEC $\tilde{u}$ to be a CSES by the third-order derivative equilibrium condition.

\section{\texorpdfstring{\Cref{exp: linear control problem}}{Example 3.3} continuation}
\label{sec: Example continuation}

In this section, in addition to the conditions in \Cref{exp: linear control problem},
we suppose that $( B, D, F ) \in {C}^{1}$ with $B \ne 0$ a.e. on $[ 0,T ]$.
Then, we investigate which condition \cref{eq: derived CNEC :eq} yields a CSES for the control problem formulated by \cref{eq: linear control problem :eq}.
Since the objective function in \cref{eq: linear control problem :eq} implies that
\begin{equation*}
\Phi ( t,x,y, \vec{z} ) = {z}_{1} + \psi \bigg( t, {z}_{2} - {z}_{1}^{2}, \ldots, \sum_{j=0}^{n} \binom{n}{j} (-1)^{j} {z}_{1}^{j} {z}_{n-j} \bigg)
\end{equation*}
is independent of $( x,y )$, we conclude that ${U}^{ s,y, \vec{z} } ( t,x )$ is independent of $( y,t,x )$, and hence,
\begin{equation*}
\vec{H}^{ s,y, \vec{z} } ( t,x,u ) = \big( 0, \vec{m}_{x} ( t,x ) \big) ( {A}_{t} x + {B}_{t} u + {C}_{t} ) + \frac{1}{2} \big( 0, \vec{m}_{xx} ( t,x ) \big) | {D}_{t} u + {F}_{t} |^{2}.
\end{equation*}
Define ${f}^{ s, \vec{z} } := {f}^{ s,y, \vec{z} } ( t,x )$ for the $( y,t,x )$-independent $f = U, \vec{\lambda }, \mu $.
Consequently, for the given CNEC \cref{eq: derived CNEC :eq}, the optimality conditions \cref{eq: first-order derivative condition :eq} and \cref{eq: second-order derivative condition :eq} reduce to
\begin{align}
0 & = \langle \vec{\lambda }^{ t, \vec{m} ( t,x ) }, \vec{m}_{x} ( t,x ) {B}_{t} + \vec{m}_{xx} ( t,x ) {D}_{t} \big( {D}_{t} \tilde{u} ( t,x ) + {F}_{t} \big) \rangle, 
\label{eq: first-order derivative condition ex :eq}
\\
0 & > \langle \vec{\lambda }^{ t, \vec{m} ( t,x ) }, \vec{m}_{xx} ( t,x ) \rangle, \notag 
\end{align}
i.e., the system \cref{eq: beta system :eq} for $\beta $ (see \cite[the proof of Theorem 5.2]{Wang-Liu-Bensoussan-Yiu-Wei-2025}),
while the third-order derivative equilibrium condition \cref{eq: equilibrium condition third-order I :eq} reduces to
\begin{align}
0 & < 2 \tilde{u}_{t} ( t,x ) \Big\langle \vec{\lambda }^{ t, \vec{m} ( t,x ) }, 
                                          \mathcal{D}_{ \tilde{u} ( t,x ) } \Big( \vec{m}_{x} ( t,x ) {B}_{t} + \vec{m}_{xx} ( t,x ) {D}_{t} \big( {D}_{t} \tilde{u} ( t,x ) + {F}_{t} \big) \Big) \Big\rangle \notag \\
  & \quad - | \tilde{u}_{t} ( t,x ) |^{2} \langle \vec{\lambda }^{ t, \vec{m} ( t,x ) }, \vec{m}_{xx} ( t,x ) \rangle | {D}_{t} |^{2};
\label{eq: equilibrium condition third-order ex :eq}
\end{align}
see also \Cref{rem: state-independent case}.

Corresponding to \cref{eq: derived CNEC :eq}, ${X}^{ t,x, \tilde{u} }_{T}$ has a normal distribution with the variation $\int_{t}^{T} | {\beta }_{s} |^{2} ds$
and the diffeomorphisms $\frac{\partial }{\partial x} {X}^{ t,x, \tilde{u} }_{T} = {e}^{ \int_{t}^{T} {A}_{v} dv }$, implying that
$\frac{\partial }{\partial x} \mathbb{E}_{t} [ ( {X}^{ t,x, \tilde{u} }_{T} )^{j} ] = j \mathbb{E}_{t} [ ( {X}^{ t,x, \tilde{u} }_{T} )^{j-1} ] {e}^{ \int_{t}^{T} {A}_{v} dv }$ for any positive integer $j$.
In particular, according to the chain rule,
\begin{equation}
{e}^{ \int_{t}^{T} {A}_{v} dv } = {\partial }_{x} J ( t,x, \tilde{u} ) = {\partial }_{x} {U}^{ t, \vec{m} ( t,x ) } = \langle \vec{\lambda }^{ t, \vec{m} ( t,x ) }, \vec{m}_{x} ( t,x ) \rangle.
\label{eq: a brief result ex :eq}
\end{equation}
Moreover, differentiating both sides of $0 = \mathcal{D}_{ \tilde{u} ( t,x ) } \vec{m} ( t,x )$ with respect to $x$ up to the second order yields
$0 = \mathcal{D}_{ \tilde{u} ( t,x ) } \vec{m}_{x} ( t,x ) + {A}_{t} \vec{m}_{x} ( t,x )$ and $0 = \mathcal{D}_{ \tilde{u} ( t,x ) } \vec{m}_{xx} ( t,x ) + 2 {A}_{t} \vec{m}_{xx} ( t,x )$.
Consequently,
\begin{align*}
& \mathcal{D}_{ \tilde{u} ( t,x ) } \Big( \vec{m}_{x} ( t,x ) {B}_{t} + \vec{m}_{xx} ( t,x ) {D}_{t} \big( {D}_{t} \tilde{u} ( t,x ) + {F}_{t} \big) \Big) \\
& = \vec{m}_{x} ( t,x ) {\partial }_{t} {B}_{t} + \vec{m}_{xx} ( t,x ) \big( {D}_{t} \tilde{u} ( t,x ) + {F}_{t} \big) {\partial }_{t} {D}_{t} 
  + \vec{m}_{xx} ( t,x ) {D}_{t} {\partial }_{t} \big( {D}_{t} \tilde{u} ( t,x ) + {F}_{t} \big) \\
& \quad - {A}_{t} \vec{m}_{x} ( t,x ) {B}_{t} - 2 {A}_{t} \vec{m}_{xx} ( t,x ) {D}_{t} \big( {D}_{t} \tilde{u} ( t,x ) + {F}_{t} \big).
\end{align*}
Plugging this identity back into the right-hand side (RHS for short) of \cref{eq: equilibrium condition third-order ex :eq}
and using \cref{eq: derived CNEC :eq}, \cref{eq: first-order derivative condition ex :eq} and \cref{eq: a brief result ex :eq} to simplify the resulting expression yields
\begin{align*}
\text{RHS of \cref{eq: equilibrium condition third-order ex :eq}}
& = 2 \tilde{u}_{t} ( t,x ) \langle \vec{\lambda }^{ t, \vec{m} ( t,x ) }, \vec{m}_{x} ( t,x ) \rangle ( {A}_{t} {B}_{t} + {\partial }_{t} {B}_{t} ) \\
& \quad + 2 \tilde{u}_{t} ( t,x ) \langle \vec{\lambda }^{ t, \vec{m} ( t,x ) }, \vec{m}_{xx} ( t,x ) \rangle \big( {D}_{t} \tilde{u} ( t,x ) + {F}_{t} \big) {\partial }_{t} {D}_{t} \\
& \quad + 2 \tilde{u}_{t} ( t,x ) \langle \vec{\lambda }^{ t, \vec{m} ( t,x ) }, \vec{m}_{xx} ( t,x ) \rangle {D}_{t} {\partial }_{t} \big( {D}_{t} \tilde{u} ( t,x ) + {F}_{t} \big) \\
& \quad - | \tilde{u}_{t} ( t,x ) |^{2} \langle \vec{\lambda }^{ t, \vec{m} ( t,x ) }, \vec{m}_{xx} ( t,x ) \rangle | {D}_{t} |^{2} \\
& = 2 \tilde{u}_{t} ( t,x ) {B}_{t} {e}^{ \int_{t}^{T} {A}_{v} dv } ( {A}_{t} + {\partial }_{t} \ln | {B}_{t} | )
  - 2 \tilde{u}_{t} ( t,x ) {B}_{t} {e}^{ \int_{t}^{T} {A}_{v} dv } {\partial }_{t} \ln {D}_{t} \\
& \quad - 2 \tilde{u}_{t} ( t,x ) {B}_{t} {e}^{ \int_{t}^{T} {A}_{v} dv } {\partial }_{t} \ln | {D}_{t} \tilde{u} ( t,x ) + {F}_{t} |
        + \frac{ {B}_{t} {D}_{t} | \tilde{u}_{t} ( t,x ) |^{2} }{ {D}_{t} \tilde{u} ( t,x ) + {F}_{t} } {e}^{ \int_{t}^{T} {A}_{v} dv } \\
& = \bigg( \Big| \frac{ {D}_{t} }{ {\beta }_{t} } \tilde{u}_{t} ( t,x ) {e}^{ \int_{t}^{T} {A}_{v} dv } \Big|^{2}
         - 2 {\partial }_{t} \ln \frac{ {\beta }_{t} {D}_{t} }{ {B}_{t} } \cdot \frac{ {D}_{t} }{ {\beta }_{t} } \tilde{u}_{t} ( t,x ) {e}^{ \int_{t}^{T} {A}_{v} dv } \bigg) \frac{ {B}_{t} {\beta }_{t} }{ {D}_{t} },
\end{align*}
where ${B}_{t} {\beta }_{t} > 0$ according to \cref{eq: beta system :eq} unless ${B}_{t} = 0$.
Therefore, for a.e. $t \in [ 0,T ]$, \cref{eq: equilibrium condition third-order ex :eq} is equivalent to
\begin{equation*}
  \bigg| {\partial }_{t} \ln \frac{ {\beta }_{t} {D}_{t} }{ {B}_{t} } \bigg|
< \bigg| \frac{ {D}_{t} }{ {\beta }_{t} } \tilde{u}_{t} ( t,x ) {e}^{ \int_{t}^{T} {A}_{v} dv } - {\partial }_{t} \ln \frac{ {\beta }_{t} {D}_{t} }{ {B}_{t} } \bigg|
\equiv \bigg| {A}_{t} - {\partial }_{t} \ln \frac{ | {D}_{t} |^{2} }{ | {B}_{t} | } - \frac{ {D}_{t} {\partial }_{t} {F}_{t} - {F}_{t} {\partial }_{t} {D}_{t} }{ {\beta }_{t} {D}_{t} } {e}^{ \int_{t}^{T} {A}_{v} dv } \bigg|.
\end{equation*}
That is, the derived CNEC \cref{eq: derived CNEC :eq} satisfying \cref{eq: beta system :eq} is a CSES if the above inequality holds for a.e. $t \in [ 0,T ]$.

In particular, if $\frac{ {F}_{t} }{ {D}_{t} }$ is independent of $t$, then the sufficient condition reduces to
\begin{equation*}
\bigg| {A}_{t} - {\partial }_{t} \ln \frac{ | {D}_{t} |^{2} }{ | {B}_{t} | } \bigg| > \bigg| {\partial }_{t} \ln \frac{ {\beta }_{t} {D}_{t} }{ {B}_{t} } \bigg|,
\end{equation*}
which implies that ${A}_{t}$ is far away from ${\partial }_{t} \ln \frac{ | {D}_{t} |^{2} }{ | {B}_{t} | }$ for all $t \in [ 0,T )$.
On the other hand, for the mean-variance problem with $J ( t,x,u ) = \mathbb{E}_{t} [ {X}^{t,x,u}_{T} ] - \frac{\gamma }{2} \mathbb{E}_{t} [ ( {X}^{u}_{T} - \mathbb{E}_{t} [ {X}^{u}_{T} ] )^{2} ]$, $\gamma > 0$,
one obtains ${\beta }_{t} = \frac{ {B}_{t} }{ \gamma {D}_{t} }$ from \cref{eq: beta system :eq}
and then arrives at the CNEC $\tilde{u} ( t,x ) = \frac{ {B}_{t} }{ \gamma | {D}_{t} |^{2} } {e}^{ - \int_{t}^{T} {A}_{v} dv } - \frac{ {F}_{t} }{ {D}_{t} }$.
Moreover, this CNEC is a CSES if
\begin{equation*}
0 \ne {A}_{t} - {\partial }_{t} \ln \frac{ | {D}_{t} |^{2} }{ | {B}_{t} | } - \frac{ {D}_{t} {\partial }_{t} {F}_{t} - {F}_{t} {\partial }_{t} {D}_{t} }{ {\beta }_{t} {D}_{t} } {e}^{ \int_{t}^{T} {A}_{v} dv }, \quad
a.e. ~ t \in [ 0,T ).
\end{equation*}
In the case where $( B,D,F )$ are constant, this sufficient condition reduces to ${A}_{t} \ne 0$ for a.e. $t \in [ 0,T )$; i.e., the zeros of $A$ on $[ 0,T )$ form a Lebesgue null set.
Furthermore, if $A \equiv 0$ on the right neighborhood of $t$, then $\tilde{u} ( \cdot, x ) \equiv \tilde{u} ( t,x )$, implying that $\tilde{u}^{ t, \varepsilon, \tilde{u} ( t,x ) } = \tilde{u}$.
Thus, the derived mean-variance CNEC $\tilde{u}$ must be a CSES if $( A,B,D,F )$ are constant.

In terms of the mean-variance-skewness-kurtosis problem with 
\begin{equation*}
J ( t,x,u ) = \mathbb{E}_{t} [ {X}^{t,x,u}_{T} ] - \frac{\gamma }{2} \mathbb{E}_{t} \big[ ( {X}^{u}_{T} - \mathbb{E}_{t} [ {X}^{u}_{T} ] )^{2} \big]
            + \frac{\rho }{6} \mathbb{E}_{t} \big[ ( {X}^{u}_{T} - \mathbb{E}_{t} [ {X}^{u}_{T} ] )^{3} \big]
            - \frac{\kappa }{24} \mathbb{E}_{t} \big[ ( {X}^{u}_{T} - \mathbb{E}_{t} [ {X}^{u}_{T} ] )^{4} \big],
\end{equation*}
where $\gamma, \kappa > 0$, \cref{eq: derived CNEC :eq} is a CNEC
if $y = \frac{1}{2} \int_{t}^{T} | {\beta }_{s} |^{2} ds$ solves $\frac{1}{2} \int_{t}^{T} | \frac{ {B}_{s} }{ {D}_{s} } |^{2} ds = \int_{0}^{y} | \gamma + \kappa z |^{2} dz$ and ${B}_{t} {\beta }_{t} \ge 0$
according to \Cref{exp: linear control problem}.
Thus, $\frac{ {\beta }_{t} {D}_{t} }{ {B}_{t} } = ( {\gamma }^{3} + \frac{3}{2} \kappa \int_{t}^{T} | \frac{ {B}_{s} }{ {D}_{s} } |^{2} ds )^{ - \frac{1}{3} }$.
Consequently, $\tilde{u}$ is a CSES if
\begin{equation*}
0 < \bigg( \frac{ 2 {\gamma }^{3} }{\kappa } + 3 \int_{t}^{T} \Big| \frac{ {B}_{s} }{ {D}_{s} } \Big|^{2} ds \bigg)
    \bigg| {A}_{t} - {\partial }_{t} \ln \frac{ | {D}_{t} |^{2} }{ | {B}_{t} | } - \frac{ {D}_{t} {\partial }_{t} {F}_{t} - {F}_{t} {\partial }_{t} {D}_{t} }{ {\beta }_{t} {D}_{t} } {e}^{ \int_{t}^{T} {A}_{v} dv } \bigg|
  - \Big| \frac{ {B}_{t} }{ {D}_{t} } \Big|^{2}.
\end{equation*}
Conversely, according to \Cref{rem: necessary conditions for CSES}, $\tilde{u}$ is a CSES only if the right-hand side of the above inequality is non-negative.
Hence, the derived mean-variance-skewness-kurtosis CNEC $\tilde{u}$ is not necessarily a CSES even though $( A,B,D,F )$ are constant.

\begin{remark}
By taking advantage of the independence of $x$ for \cref{eq: derived CNEC :eq}, one can adopt straightforward calculations to examine whether the derived CNEC $\tilde{u}$ is a CSES.
Indeed, ${X}^{ t,x, \tilde{u}^{ t, \varepsilon, \zeta } }_{T}$ is normally distributed with a mean of
\begin{equation*}
\mathbb{E}_{t} [ {X}^{ t,x, \tilde{u}^{ t, \varepsilon, \zeta } }_{T} ] 
= \mathbb{E}_{t} [ {X}^{ t,x, \tilde{u} }_{T} ]
+ {e}^{ \int_{t}^{T} {A}_{v} dv } \int_{t}^{ t + \varepsilon } {e}^{ - \int_{t}^{s} {A}_{v} dv } {B}_{s} \big( \zeta - \tilde{u} ( s,0 ) \big) ds,
\end{equation*}
a variance of
\begin{align*}
& \mathbb{E}_{t} \big[ \big( {X}^{ t,x, \tilde{u}^{ t, \varepsilon, \zeta } }_{T} - \mathbb{E}_{t} [ {X}^{ t,x, \tilde{u}^{ t, \varepsilon, \zeta } }_{T} ] \big)^{2} \big] \\
& = \mathbb{E}_{t} \big[ \big( {X}^{ t,x, \tilde{u} }_{T} - \mathbb{E}_{t} [ {X}^{ t,x, \tilde{u} }_{T} ] \big)^{2} \big]
  + {e}^{ 2 \int_{t}^{T} {A}_{v} dv } \int_{t}^{ t + \varepsilon } {e}^{ - 2 \int_{t}^{s} {A}_{v} dv } \big( | {D}_{s} \tilde{u} ( t,0 ) + {F}_{s} |^{2} - | {D}_{s} \tilde{u} ( s,0 ) + {F}_{s} |^{2} \big) ds,
\end{align*}
and higher-order central moments of
\begin{equation*}
  \mathbb{E}_{t} \big[ \big( {X}^{ t,x, \tilde{u}^{ t, \varepsilon, \zeta } }_{T} - \mathbb{E}_{t} [ {X}^{ t,x, \tilde{u}^{ t, \varepsilon, \zeta } }_{T} ] \big)^{j} \big]
= {1}_{\{ j \in 2 \mathbb{N} \}} \frac{j!}{ \frac{j}{2} ! }
  \Big| \frac{1}{2} \mathbb{E}_{t} \big[ \big( {X}^{ t,x, \tilde{u}^{ t, \varepsilon, \zeta } }_{T} - \mathbb{E}_{t} [ {X}^{ t,x, \tilde{u}^{ t, \varepsilon, \zeta } }_{T} ] \big)^{2} \big] \Big|^{ \frac{j}{2} }.
\end{equation*}
Then, one can expand $J ( t,x, \tilde{u}^{ t, \varepsilon, \zeta } ) - J ( t,x, \tilde{u} )$ with respect to $\varepsilon $ up to an adequate order.
\end{remark}

\section{Concluding remarks}
\label{sec: Concluding remarks}

We study strong equilibrium strategies for closed-loop time-inconsistent control problems with higher-order moments.
We investigate the expansion of the objective function with respect to the variational factor up to higher orders and then derive the sufficient condition for a CSES.
Furthermore, we find the (semi-)closed-form expression of the sufficient condition for the higher-order moment problem with linear controlled SDEs.
In particular, if the parameters of the controlled SDE are constant, then the derived mean-variance CNEC must be a CSES, whereas the derived mean-variance-skewness-kurtosis CNEC is not necessarily a CSES.

\bibliographystyle{apacite}
\bibliography{references}

\end{document}